\newtheorem{thm}{Theorem}[section]
\newtheorem{lem}[thm]{Lemma}
\newtheorem{prop}[thm]{Proposition}
\newtheorem{cor}[thm]{Corollary}
\theoremstyle{definition}
\newtheorem{exmp}{Example}[section]
\theoremstyle{remark}
\newtheorem{rem}{Remark}
\theoremstyle{remark}
\newtheorem{ques}{Question}
\theoremstyle{definition}
\newtheorem{defn}{Definition}[section]
\title{Coverings of Configurations, Prime Configurations, and Orbiconfigurations}
\author{
  Benjamin Peet\\
  Department of Mathematics\\
  St. Martin's University\\
  Lacey, WA 98503 \\
  \texttt{bpeet@stmartin.edu} \\
}
\begin{document}
\maketitle

\begin{abstract}

\end{abstract}
This exploratory paper considers the notion of a covering of a configuration and $G$-coverings which are coverings that are quotients under a semi-regular group action. We consider prime configurations, those which cannot $G$-cover other configurations, before considering orbiconfigurations. These are a generalized notion of a configuration in the spirit of an orbifold. We derive some specific results as to when configurations are prime as well as considering when an orbiconfiguration is bad - that is, when it cannot be $G$-covered by a configuration. A number of open questions are posited within.

\textbf{Keywords:} Configuration, Covering, Orbiconfiguration

\textbf{2010 MSC Classification:} 05B99

\section{Introduction}

An incidence geometry is given by a pair $(\mathcal{P},\mathcal{L})$ where $\mathcal{P}=\{p_{1}, \ldots , p_{n}\} $; $\mathcal{L}=\{l_{1}, \ldots ,l_{m}\}$; each $l_{i}\subset \mathcal{P}$; and for any pair $p_{i_{1}},p_{i_{2}}$ there is at most one line $l_{j}$ that contains both elements. Naturally, the elements of $\mathcal{P}$ are known as points and the elements of $\mathcal{L}$ are known as lines (or in some sources as blocks). We make the assumption that the space is connected in the sense that the collection of points cannot be split without splitting a line. An incidence geometry is further known as a configuration if each point is incident with the same number of lines as any other (denoted $s$); each line is incident with the same number of points as any other (denoted $t$); and there are at least three points.

The possible configurations have been considered in particular when $n=m$ and $s=t$. These are known as $(n_{s})$ symmetric configurations. For more details, see in particular Grünbaum's book \textit{Configurations of Points and Lines} \cite{grunbaum2009configurations}. This paper considers more generally $(n_{s},m_{t})$ configurations as described above and the possible semi-regular group actions on these spaces. That is, subgroups of the automorphism group such that each element acts freely on the configuration. In this case, the orbit space may also be a configuration - there are further conditions to ensure this.

Once this has been considered, we turn our attention to orbit spaces that are not themselves configurations and use the modern notion of orbifold to construct a definition of an orbiconfiguration. Orbifolds were introduced by Thurston as a more refined notion of a branched covering. A good source on orbifolds would be Chapter 13 of his notes \textit{The geometry and topology of 3-manifolds} \cite{thurston1979geometry}. This has some connections with (and inspiration due to) the recent work in orbigraphs by Daly et. al. in \textit{Orbigraphs: a graph theoretic analog to Riemannian orbifolds} \cite{daly2019orbigraphs}, but we begin entirely from scratch to define our notion of an orbiconfiguration. 

As per orbifolds (and orbigraphs), some interest lies in which orbiconfigurations cannot be covered by a configuration. We give some examples of such orbiconfigurations.

\section{Preliminaries}

\subsection{Definitions}

For convenience, we will at times suppress $p_{i},l_{j}$ to simply their integers $i,j$. We also use the notation of Betten et. al. in their paper \textit{Counting symmetric $v_{3}$ configurations} \cite{betten2000counting} to denote the lines of certain configurations using modulus. For example $\{1,2,4\}\textit{ mod }7$ would indicate that the lines are $\{1,2,4\},\{2,3,5\},\ldots$ etc.

Due to its' importance to this paper we formally define the notion of an automorphism of a configuration:

\begin{defn}
An \textit{automorphism} on a $(n_{s},m_{t})$-configuration $(\mathcal{P},\mathcal{L})$ is a bijection $f:\mathcal{P}\rightarrow\mathcal{P}$ such that for each $l_{i_{1}}$ there is some $l_{i_{2}}$ such that $f(l_{i_{1}})=l_{i_{2}}$. We use the notation of $Aut((\mathcal{P},\mathcal{L}))$ to refer to the group of all automorphisms of the configuration.
\end{defn}

A \textit{duality} between configurations is an incidence preserving map that sends points to lines and lines to points. A configuration is \textit{self-dual} if there is a duality from itself to itself.

A \textit{Menger graph} is a representation of a configuration as an undirected graph where the points are shown as the vertices and the lines are made up by a collection of edges. See \textit{The topology of the configuration of Desargues and Pappus} by Van Straten \cite{van1947topology} and \textit{Configurations and maps} by Coxeter \cite{coxeter1947configurations}.

As noted in \textit{Planar projective configurations (Part 1)} by Mendelsohn et. al. \cite{mendelsohn1987planar}, a Menger graph does not uniquely determine the configuration. We hence consider the \textit{Levi graph} that was presented by Friedrich Levi in \textit{Finite geometrical systems} \cite{levi1942finite}. This is a bipartite graph where (conventionally) black vertices refer to the points of the configuration and white vertices refer to the lines. Edges exist only between black and white vertices and refer to an incidence of the point and the line. The Levi graph uniquely determines the configuration.

\subsection{Results}

We then state some known results for exposition: 

\begin{lem}
$Aut((\mathcal{P},\mathcal{L}))$ is a subgroup of the symmetric group with order the minimum of $n$ and $m$.
\end{lem}

\begin{proof}
This follows directly from the fact that each automorphism is a both a permutation on the $n$ points and $m$ lines.
\end{proof}

We now make the note that it is well established that $n,m,s,t$ do not uniquely determine a configuration. For example, \cite{betten2000counting} deals with the number of non-isomorphic $(n_{3})$ configurations.

Finally, we state the following two well known foundational results. For more details, see page 15 of \cite{grunbaum2009configurations}.

\begin{prop}
For a $(n_{s},m_{t})$-configuration, the following equation holds:
$$nt=ms$$
\end{prop}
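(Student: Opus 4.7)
The plan is to prove the identity by a one-line double-counting argument on the set of flags $\mathcal{F} = \{(p,l) \in \mathcal{P} \times \mathcal{L} : p \in l\}$, i.e., the set of incident point-line pairs. Both of the regularity hypotheses built into the definition of a configuration collapse the sum $|\mathcal{F}|$ to a simple product, and carrying out the count in two different orders immediately produces the stated relation.

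First I would count $|\mathcal{F}|$ by summing over points: since each of the $n$ points is incident with the common number of lines prescribed by the $(n_s,m_t)$-configuration axioms, one obtains $|\mathcal{F}|$ as the product of $n$ with that common count. Next I would count $|\mathcal{F}|$ dually by summing over lines: since each of the $m$ lines carries the common number of points guaranteed by the other regularity condition, $|\mathcal{F}|$ equals the product of $m$ with that common count. Equating the two expressions, and matching the subscripts against the paper's $(n_s,m_t)$ notational convention so that $s$ and $t$ are assigned to the correct side, yields $nt = ms$.

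There is essentially no obstacle here: the argument is just the handshake identity applied to the bipartite incidence graph, in which both the point-class and the line-class are regular. I would therefore present the proof as a single short paragraph without further elaboration, since the bulk of the content is already encoded in the definition of a configuration and the result is standard in the literature cited (notably \cite{grunbaum2009configurations}). The only ``care'' required is to ensure the subscript-matching is consistent with the notation fixed earlier in the Preliminaries.
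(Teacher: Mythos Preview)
Your flag-counting argument is correct and is the standard proof; there is nothing to add on the mathematical side. Note, however, that the paper does not supply its own proof of this proposition: it is simply stated as one of two ``well known foundational results'' with a pointer to \cite{grunbaum2009configurations}, so there is no in-paper argument to compare against.

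One remark on the subscript-matching you yourself flagged as the only point of care: with the conventions fixed in the Introduction ($s$ is the number of lines through each point, $t$ the number of points on each line), the double count gives $|\mathcal{F}| = ns = mt$, not $nt = ms$. The identity as printed in this proposition has the roles of $s$ and $t$ transposed; indeed, the orbiconfiguration analogue proved later in the paper is stated and computed in the form $ns = mt$, which confirms the intended convention. So your method is right, but carrying it out faithfully against the paper's definitions yields $ns = mt$ rather than the displayed $nt = ms$.
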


\begin{prop}
For a $(n_{s},m_{t})$-configuration, the following inequalities hold:
$$s(t-1)+1 \leq m $$ $$ t(s-1)+1 \leq n$$
\end{prop}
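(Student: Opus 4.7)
The plan is to prove both inequalities by complementary fix-and-count arguments, each resting on the basic incidence axiom that any two distinct points lie on at most one common line (and, by symmetry, any two distinct lines meet in at most one point).

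For one of the two bounds, I would fix an arbitrary line $l\in\mathcal{L}$ and count the other lines of $\mathcal{L}$. Each of the points on $l$ is incident with further lines besides $l$, and these additional line-sets are pairwise disjoint across the different points of $l$: if some line $l'\neq l$ passed through two distinct points $p,q$ of $l$, then the pair $\{p,q\}$ would be contained in both $l$ and $l'$, contradicting the axiom. Multiplying the number of points on $l$ by the number of additional lines through each, and adding one for $l$ itself, produces one of the asserted inequalities.

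For the other bound, I would apply the dual argument: fix an arbitrary point $p\in\mathcal{P}$ and count the other points. Each line through $p$ contains points other than $p$, and these other-point-sets are pairwise disjoint across the lines through $p$, for else two of those lines would meet in a second point and the pair $\{p,q\}$ would again determine two distinct lines, violating the axiom. Multiplying and adding one for $p$ itself yields the remaining inequality.

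The argument has essentially no substantial obstacle: once the pairwise-disjointness claim is spelled out as a direct consequence of the incidence axiom, each bound drops out of a single multiplication. The only real care needed is bookkeeping — making sure the roles of $s$ and $t$ are correctly matched to "lines through a fixed point" versus "points on a fixed line" — and a sanity check that the two resulting lower bounds are consistent with the incidence-counting identity of the preceding proposition, which is immediate.
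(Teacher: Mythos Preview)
Your argument is correct and is precisely the standard proof of this well-known fact. Note, however, that the paper does not actually supply a proof of this proposition: it is stated as a foundational result with a reference to Gr\"unbaum's book, so there is no ``paper's own proof'' to compare against. Your fix-a-line and fix-a-point double count, with the pairwise-disjointness forced by the incidence axiom, is exactly the argument one finds in the standard references. Your closing remark about bookkeeping is apt: the paper's conventions for $s$ and $t$ (and the identity $nt=ms$ stated just before) deserve a careful read when you match each bound to the correct variable.
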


These will be used as we continue.

\section{Coverings of Configurations}

Coverings of graphs have been considered primarily by Frank Leighton, Dana Angluin, Anthony Gardiner, and Walter Neumann in \cite{leighton1982finite},\cite{angluin1981finite}, and \cite{neumann2009leighton}. We note here again that even though a configuration can be represented by a graph (the Menger graph), a graph can represent two different configurations. Thus the notion of a covering of a configuration is certainly distinct and mention must also be made here that an automorphism of a configuration may not be continuous when viewed as a self-map of the associated graph.

To see this consider:

\begin{exmp}
Take the configuration of 25 points and 10 lines arranged in an array - that is two sets of 5 parallel lines.

There is a well defined automorphism that exchanges two parallel lines ($l_{1}$ and $l_{2}$ say) and leaves all other lines invariant.

\begin{figure}[ht]
\centering
\includegraphics[height=10cm]{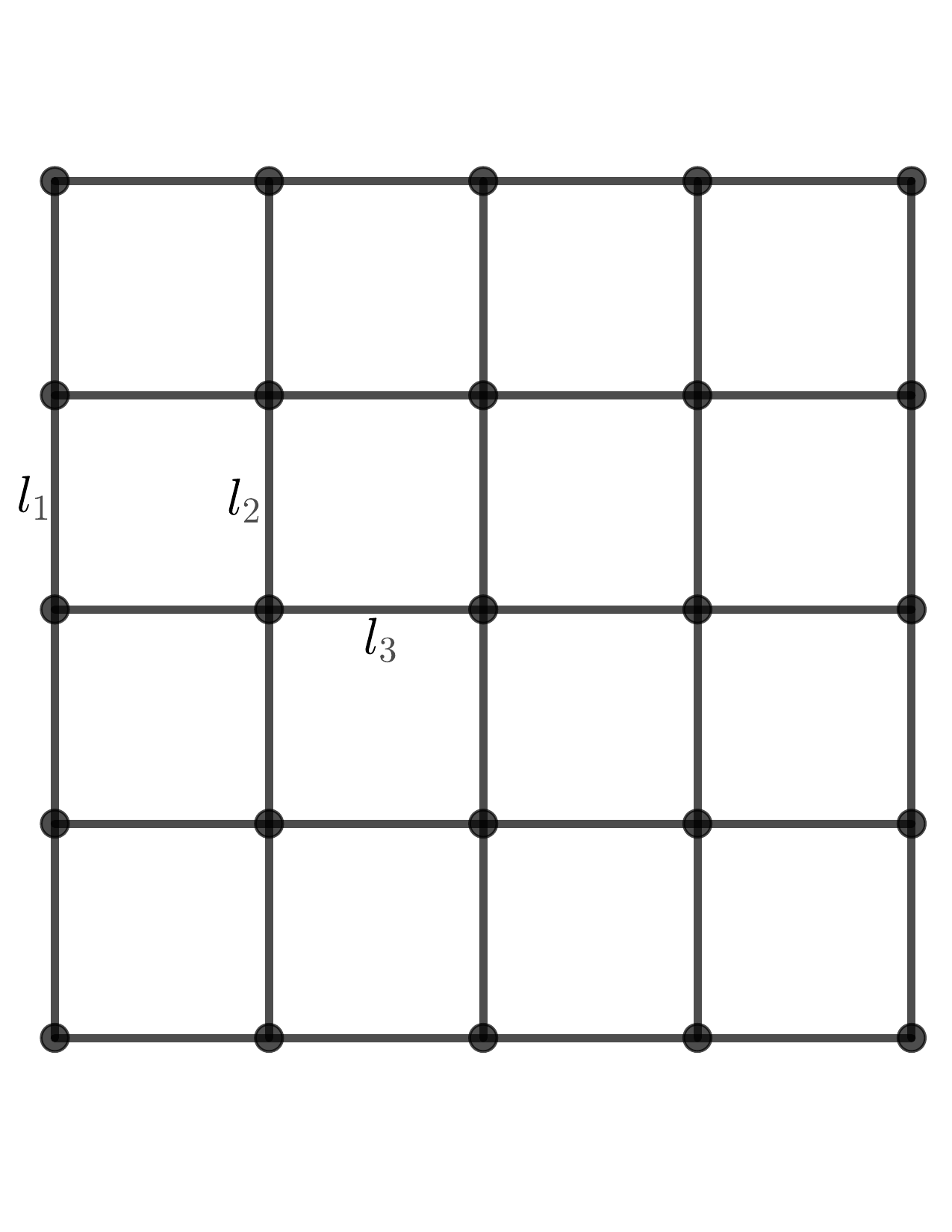}
\caption{Non-continuous automorphism of a Menger graph}
\end{figure}

It can be seen that this automorphism is not a continuous self-map of any Menger graph representation (see Figure 1 as any non-parallel line ($l_{3}$ say) is sent to itself such that 2 points are exchanged and three left invariant. There is no such continuous self-map of the interval or $S^{1}$ that can do this.

\end{exmp}

We then make the following definition:

\begin{defn}
A configuration $(\mathcal{\tilde{P}},\mathcal{\tilde{L}})$ \textit{covers} another configuration $(\mathcal{P},\mathcal{L})$ if there exists an incidence preserving surjection $q:\mathcal{\tilde{P}}\rightarrow \mathcal{P}$ so that each inverse image of a point contains the same number of points and lines are mapped to lines. We call $q:\mathcal{\tilde{P}}\rightarrow \mathcal{P}$ a covering map.
\end{defn}

This again takes some motivation from standard covering space theory in geometric topology. Here by \textit{incidence preserving} we mean that if a point $p$ is incident with a line $l$ in $(\mathcal{\tilde{P}},\mathcal{\tilde{L}})$, then $q(p)$ is incident with $q(l)$ in $(\mathcal{P},\mathcal{L})$ and also that the number of points a line is incident with does not change. This last condition effectively says that both $s$ and $t$ are invariant under the covering. This will become clearer throughout the paper, but one can consider these indices as analogous to dimension in geometric topology, which by necessity is invariant under a covering.

To elucidate this definition we give here an example of such a covering:

\newpage
\begin{exmp}
We choose a $(28_{3},28_{3})$-configuration given by the Menger graph in Figure 2:

\begin{figure}[ht]
\centering
\includegraphics[height=7cm]{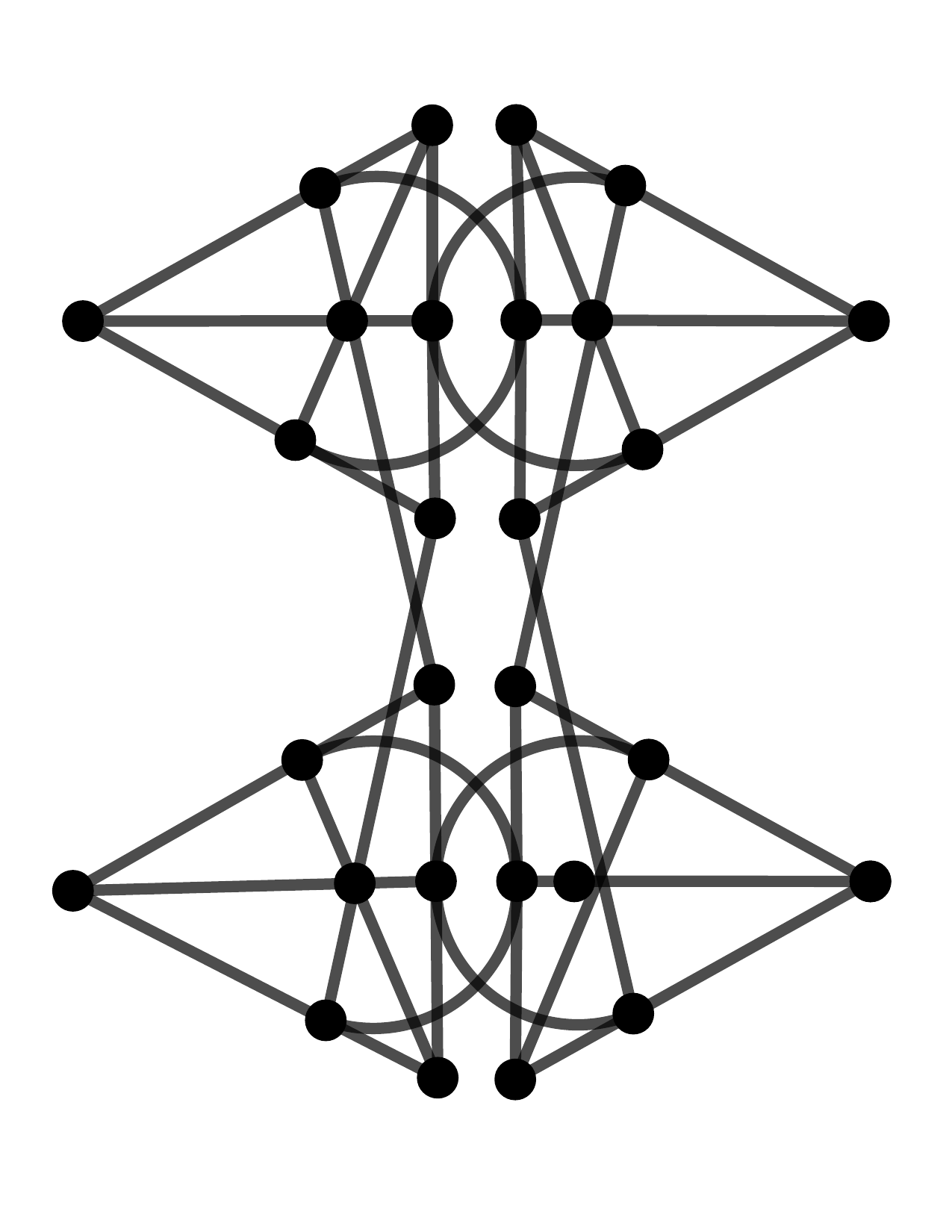}
\caption{Cover of the Fano configuration}
\end{figure}

One can see the vertical and horizontal lines of the symmetry that demonstrate that this configuration covers the Fano configuration (see Figure 4).

\end{exmp}

The above example shows a covering as a quotient of a semi-regular (no fixed points, but non-transitive) action of a group of automorphisms.

We hence consider if all covering maps are in fact quotient maps of a semi-regular group action.

To see that they in fact are not, we first define what we mean by a covering transformation:

\begin{defn}
If $q:\tilde{P} \rightarrow P$ is a covering map where $(\mathcal{\tilde{P}},\mathcal{\tilde{L}})$ covers $(\mathcal{P},\mathcal{L})$, then we say that $g \in Aut((\tilde{P},\tilde{L}))$ is a covering transformation if for any point $x$ we have that
$$q(g(x))=q(x)$$
\end{defn}

The term \textit{deck transformation} is often used by algebraic topologists and could certainly be used interchangeably here. 

It is easy enough to see that for a cover $q:\tilde{P} \rightarrow P$ the set of covering transformations form a group and we call the set of all covering transformations the group of covering transformations under the covering map. We denote this group as $G(q)$. We here show that the elements of this group all act freely both on the set of points and the set of lines, that is:

\begin{prop}
For a covering map $q:\tilde{P} \rightarrow P$ where $(\mathcal{\tilde{P}},\mathcal{\tilde{L}})$ covers $(\mathcal{P},\mathcal{L})$, the group $G(q)$ acts semi-regularly both on the set of points and the set of lines.
\end{prop}

\begin{proof}
We note that by definition of a cover, the values of $s$ and $t$ are invariant. 

Suppose that some element $g \in G(q)$ fixes a line $l$. So then if $g$ does not fix all points of the line, we note that $g(x_{1})=x_{2}$ for $x_{1},x_{2}\in l$ with $x_{1} \neq x_{2}$. So then $q(x_{1})=q(x_{2})$ and hence $q(l)$ has less than $t$ points. This is a contradiction to the definition of a cover.

So now all points on $l$ are fixed by $g$. So taking some point $x$ on the line $l$, if $g$ does not fix all lines incident with $x$ then $q(x)$ is incident with less than $s$ lines. This is again a contradiction.

So then $g$ fixes all lines incident with $x$, and following the above arguments recursively we find that necessarily $g$ fixes all points and all lines, that is, $g$ is the identity.
\end{proof}
Note that the argument could have begun with a point fixed instead.

We now ask whether every covering map is also a quotient map. To see that it is not, we give the following configuration which covers the unique $(4_{3},6_{2})$ configuration:

\begin{exmp}

\begin{figure}[ht]
\centering
\includegraphics[height=9cm]{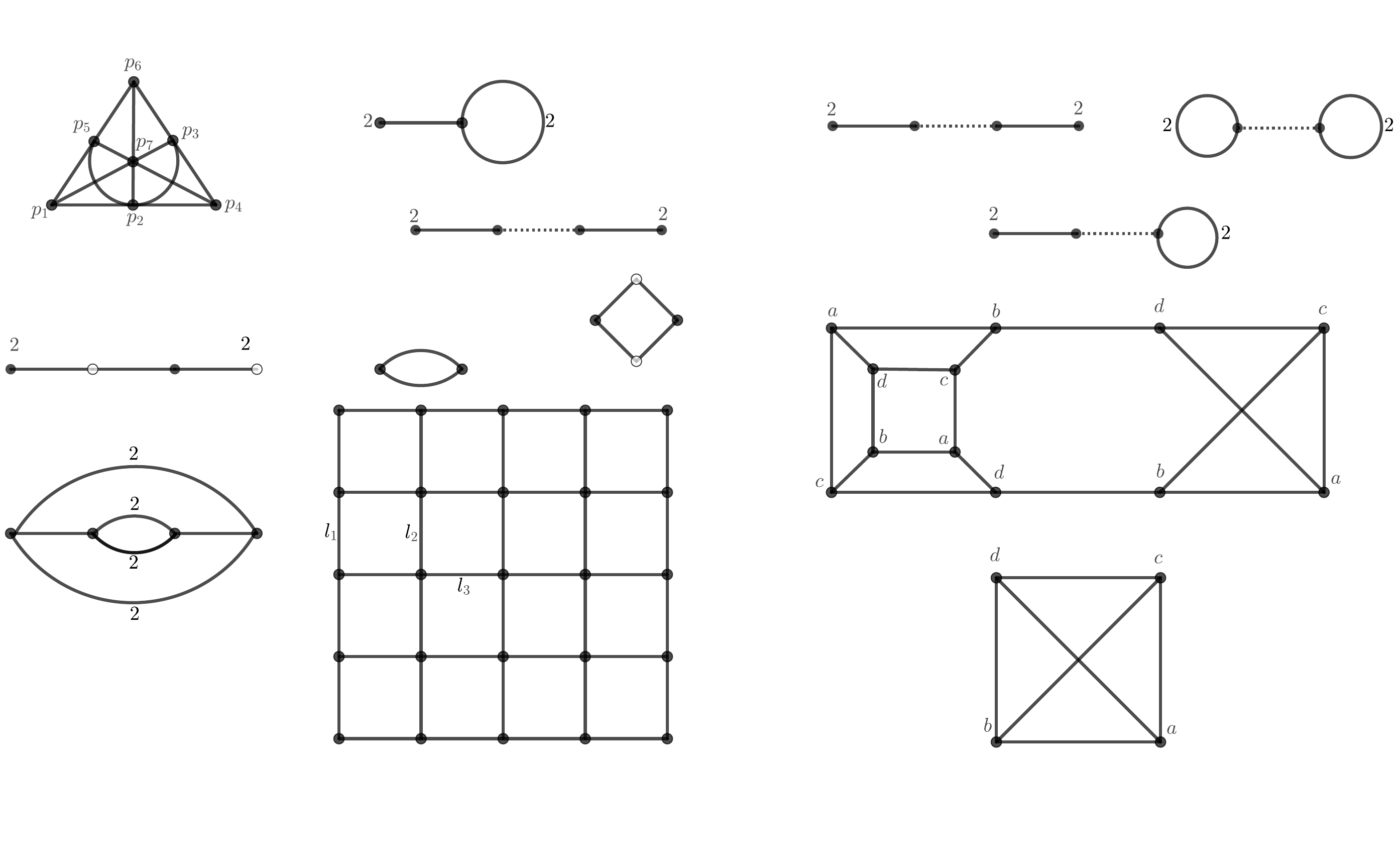}
\caption{Cover of the $(4_{3},6_{2})$ configuration}
\end{figure}

Here the labelling of $a,b,c,d$ demonstrates exactly how this covers the $(4_{3},6_{2})$ configuration.
\end{exmp}

Observing the (lack of) symmetry one can see that this covering map is not a quotient map and indeed, running some GAP code confirms that $G(q)$ is in fact trivial.

We therefore restrict to a definition of a $G$-cover:

\begin{defn}
A cover $q$ of configurations is a $G$-cover if it is also a quotient map under the action of $G(q)$.
\end{defn}

This paper will work exclusively with $G$-covers and hence we can work under the assumption that if $q(x_{1})=q(x_{2})$ then there is some $g \in G(q)$ such that $g(x_{1})=x_{2}$. Here the $x_{i}$ are either points or lines.

We now consider how we can relate automorphisms of the covering configuration to the covered configurations. This is analogous to the lifting criterion in algebraic topology, see for example Allen Hatcher's \textit{Algebraic Topology} \cite{hatcher2005algebraic}.

We first state and prove the following:

\begin{lem}
Suppose that we have a group $G$ of covering transformations for a $G$-covering $q:(\mathcal{\tilde{P}},\mathcal{\tilde{L}}) \rightarrow (\mathcal{P},\mathcal{L})$. Then any automorphism of $(\mathcal{\tilde{P}},\mathcal{\tilde{L}})$ projects to an automorphism of $(\mathcal{P},\mathcal{L})$ if the automorphism commutes with each covering transformation in $G$.
\end{lem}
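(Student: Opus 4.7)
The plan is to construct the projected map $f$ on $(\mathcal{P},\mathcal{L})$ from $\tilde{f}$ by setting $f(p) := q(\tilde{f}(\tilde{p}))$ for any chosen lift $\tilde{p} \in q^{-1}(p)$, and analogously on lines by $f(l) := q(\tilde{f}(\tilde{l}))$ for any lift $\tilde{l}$ of $l$. The whole argument then reduces to well-definedness (which is where the commutativity hypothesis is used) together with bijectivity, line-preservation and incidence-preservation.

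For well-definedness on points, suppose $\tilde{p}_1, \tilde{p}_2 \in q^{-1}(p)$. Under the implicit regularity assumption that $G$ acts transitively on each fiber, there is some $g \in G$ with $g(\tilde{p}_1) = \tilde{p}_2$, and then
$$q(\tilde{f}(\tilde{p}_2)) = q(\tilde{f}(g(\tilde{p}_1))) = q(g(\tilde{f}(\tilde{p}_1))) = q(\tilde{f}(\tilde{p}_1)),$$
using in order the choice of $g$, the commutativity of $\tilde{f}$ with $g$, and the defining relation $q \circ g = q$ for a covering translation. The same argument, applied to fibers of lines in $\tilde{\mathcal{L}}$ (noting that $G$ acts on $\tilde{\mathcal{L}}$ as automorphisms satisfying the analogous relation), yields well-definedness of $f$ on $\mathcal{L}$.

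For the remaining properties, bijectivity on $\mathcal{P}$ follows from the surjectivity of both $\tilde{f}$ and $q$, together with a symmetric descent argument for injectivity: if $f(p_1) = f(p_2)$, then $\tilde{f}(\tilde{p}_1)$ and $\tilde{f}(\tilde{p}_2)$ lie in a common fiber, so they differ by an element of $G$, which by commutativity can be pulled back through $\tilde{f}$ to give $\tilde{p}_1$ and $\tilde{p}_2$ in a common fiber, hence $p_1=p_2$. That $f$ sends lines to lines is built into the construction. For incidence, given an incident pair $(p,l)$ in $\mathcal{P}$, pick a lift $\tilde{l}$ of $l$; by the covering definition there is some $\tilde{p} \in \tilde{l}$ with $q(\tilde{p}) = p$. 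Then $\tilde{f}(\tilde{p}) \in \tilde{f}(\tilde{l})$ in the cover, and projecting recovers $f(p) \in f(l)$.

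The main obstacle is really the well-definedness step and its reliance on transitivity of $G$ on each fiber; without that hypothesis the commutativity condition alone is too weak to force $\tilde{f}$ to permute fibers, so I would flag this assumption explicitly at the outset rather than let it live implicitly inside the calculation above. Everything else amounts to carefully lifting and projecting, with the commutativity of $\tilde{f}$ and $G$ doing the essential work exactly once.
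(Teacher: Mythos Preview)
Your approach is essentially the same as the paper's: both define the projected map as $q\tilde{f}q^{-1}$ and verify well-definedness via the identical commutativity calculation $q\tilde{f}g = qg\tilde{f} = q\tilde{f}$. You are in fact more thorough, since the paper checks only well-definedness and leaves bijectivity and incidence-preservation implicit, and you rightly flag the fiber-transitivity assumption that the paper also uses without comment.
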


\begin{proof}
Suppose that $f$ is an automorphism of $(\mathcal{\tilde{P}},\mathcal{\tilde{L}})$ and then consider $qfq^{-1}$. 

To see that this is well defined, suppose that $x_{1},x_{2}$ are distinct points (or lines) of $(\mathcal{\tilde{P}},\mathcal{\tilde{L}})$ with $q(x_{1})=q(x_{2})$.

Then there is a $g \in Aut((\mathcal{P},\mathcal{L}))$ so that $g(x_{1})=x_{2}$. Now note that $qf(x_{2})=qfg(x_{1})=qgf(x_{1})=qf(x_{1})$.

Hence $qfq^{-1}$ is well defined and lifts to $f$.

\end{proof}

We consider now when an automorphism lifts. We first take an example:

\begin{exmp}
We $G$-cover the Fano plane (lines $\{1,2,4\} \textit{ mod }7$) by the configuration given by lines $\{1,2,4\} \textit{ mod }14$. The group of covering transformations is simply $C_{2}$ given by the points modulo $7$.

Now by \cite{betten2000counting}, the automorphism group of the Fano plane is $168$ whilst the size of the automorphism group of our covering configuration is $14$ (in fact $C_{14}$). Hence there are lots of automorphisms that do not lift. We exhibit just the one here.

Using the classic Menger graph representation:

\begin{figure}[ht]
\centering
\includegraphics[height=4cm]{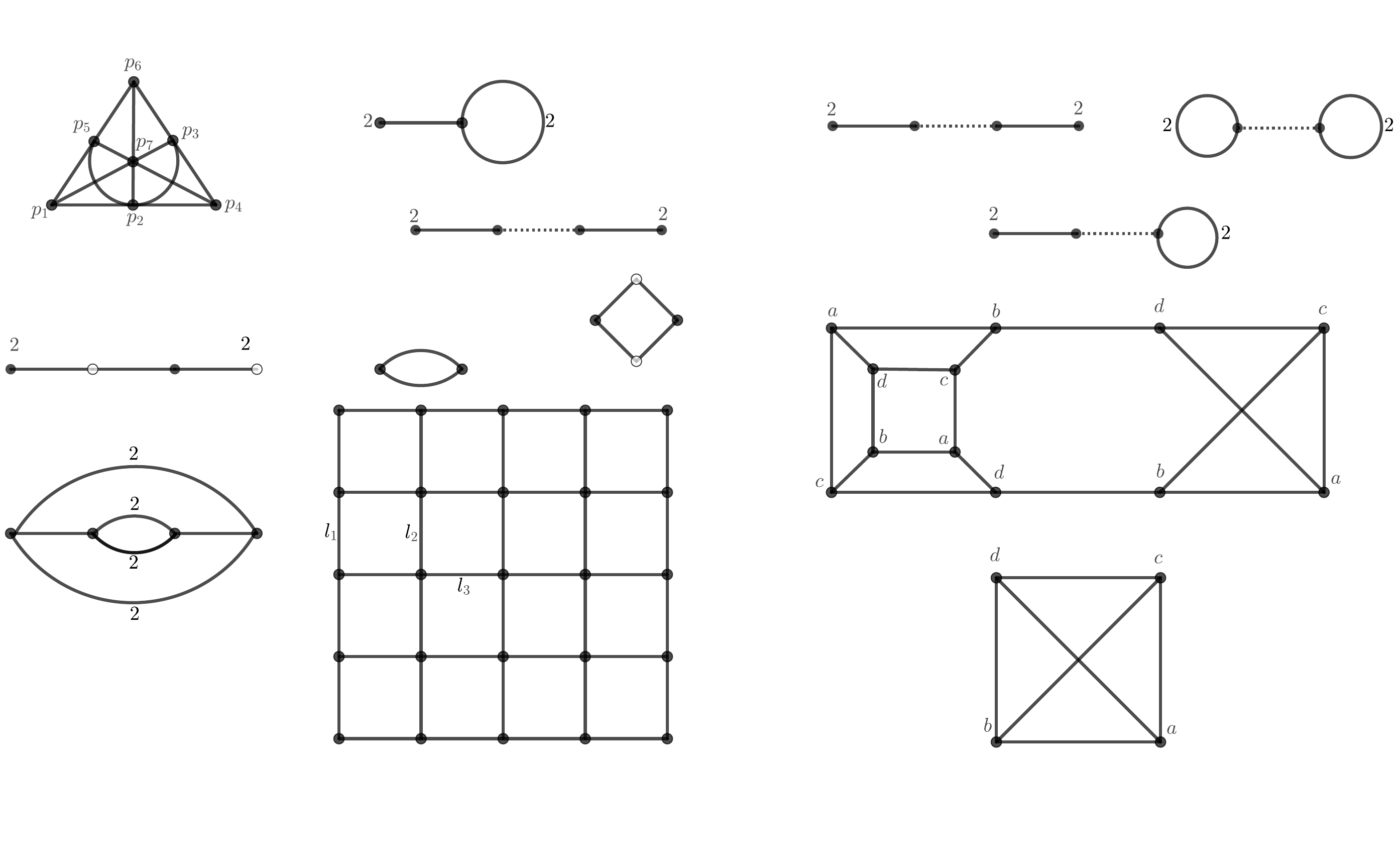}
\caption{Fano configuration}
\end{figure}

We can define a reflection through the vertical line so that $2,6,7$ are fixed; $3$ and $5$ are exchanged; and $1$ and $4$ are exchanged.

For contradiction, suppose that there is a lift of this automorphism. Take the line $\{1,2,4\}$. These points will lift to $\{4\textit{ or } 11,2\textit{ or } 9,1\textit{ or } 8\}$. The only possible combinations which are lines are $\{4,2,1\}$ and  $\{11,9,8\}$. We deal with $\{4,2,1\}$ and note that the second case is very similar.

So $\{3,4,6\}$ lifts to $\{5\textit{ or } 12,1,6\textit{ or } 13\}$. Only $\{12,1,13\}$ is a line. Then $\{4,5,7\}$ lifts to $\{1,3\textit{ or } 10,7\textit{ or } 14\}$. There is no line with these combinations and so this particular automorphism of the Fano configuration cannot lift.

\end{exmp}

We note that in the above example, the only automorphism that does lift is the order $7$ automorphism that rotates the points in their order.

For clarity we give a lemma describing what we require for an automorphism to lift:

\begin{lem}
Suppose that we have a group $G$ of covering transformations for a $G$-covering $q:(\mathcal{\tilde{P}},\mathcal{\tilde{L}}) \rightarrow (\mathcal{P},\mathcal{L})$. Then any automorphism of $(\mathcal{P},\mathcal{L})$ lifts to an automorphism of $(\mathcal{\tilde{P}},\mathcal{\tilde{L}})$ if there is a pointwise lift that sends lines to lines.
\end{lem}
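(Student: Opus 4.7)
The plan is to verify that the given pointwise lift $\tilde{f}\colon\mathcal{\tilde{P}}\to\mathcal{\tilde{P}}$, which by hypothesis satisfies $q\tilde{f}=fq$ and sends each line of $\mathcal{\tilde{L}}$ to a line of $\mathcal{\tilde{L}}$, is itself an isomorphism of the covering configuration. By the definition in the Preliminaries an isomorphism is a bijection on points which sends each line to a line; the line-to-line part is part of the hypothesis, and $q\tilde{f}=fq$ is exactly the lifting condition, so the substantive content left to prove is that $\tilde{f}$ is a bijection on $\mathcal{\tilde{P}}$.

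First I would note that line-preservation automatically forces $\tilde{f}$ to be injective on each individual line $\tilde{\ell}$: since $\tilde{f}(\tilde{\ell})$ is by hypothesis a line, it has $t$ distinct points, so $\tilde{f}|_{\tilde{\ell}}$ is a bijection from $\tilde{\ell}$ onto its image. Suppose then $\tilde{f}(x_1)=\tilde{f}(x_2)$ with $x_1\neq x_2$. Applying $q$ and using that $f$ is a bijection on $\mathcal{P}$ forces $q(x_1)=q(x_2)$, so $x_1$ and $x_2$ lie in a common fibre of $q$, and the previous observation prevents them from sharing any line. The $s$ lines through $x_1$ together with the $s$ lines through $x_2$ are therefore $2s$ distinct lines of $\mathcal{\tilde{L}}$, all of which $\tilde{f}$ sends into the $s$ lines through the common image $\tilde{f}(x_1)$, so by pigeonhole at least two distinct lines of $\mathcal{\tilde{L}}$ must collapse to the same line under $\tilde{f}$.

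The step I expect to be the main obstacle is converting this forced collapse of line-images into a clean contradiction. I would do so by projecting under $q$: the relation $q\tilde{f}=fq$ together with the bijectivity of $f$ on $\mathcal{L}$ implies that two distinct lines of $\mathcal{\tilde{L}}$ with the same $\tilde{f}$-image must project to the same line of $\mathcal{L}$. This should contradict the local structure of the covering at $\tilde{f}(x_1)$: the $s$ lines incident with any covering point project to the $s$ lines incident with the corresponding base point, and this projection must be a bijection, since otherwise two distinct points of $\mathcal{P}$ on a common line would have to share more than one line of $\mathcal{\tilde{L}}$, violating the configuration axiom. Once bijectivity of $\tilde{f}$ on $\mathcal{\tilde{P}}$ is secured, finiteness together with the line-to-line hypothesis gives bijectivity on $\mathcal{\tilde{L}}$, the isomorphism property is automatic, and $q\tilde{f}=fq$ exhibits $\tilde{f}$ as a lift of $f$; the group $G$ of covering translations enters only as the expected ambiguity in the choice of lift.
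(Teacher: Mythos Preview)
The paper's own proof is essentially a one-liner and treats the lemma as close to a tautology: a \emph{pointwise lift} is taken to mean a permutation $\tilde f$ of $\mathcal{\tilde P}$ satisfying $q\tilde f=fq$ (such permutations always exist because all fibres have the same cardinality), and the definition of isomorphism in the Preliminaries is precisely ``bijection on points that sends lines to lines''. So once the hypothesis supplies a pointwise lift sending lines to lines, there is nothing left to check.

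Your proposal reads ``pointwise lift'' more weakly, as an arbitrary map $\tilde f$ with $q\tilde f=fq$, and then spends most of its effort trying to \emph{derive} bijectivity from the line-preservation hypothesis. That is considerably more than the paper intends or proves, and your argument for it has a gap at the crucial step. From $\tilde f(x_1)=\tilde f(x_2)$ you correctly extract, via pigeonhole, two distinct lines $\ell_1,\ell_2$ with $\tilde f(\ell_1)=\tilde f(\ell_2)=L$, and hence $q(\ell_1)=q(\ell_2)$. You then invoke the local bijection ``the $s$ lines through a covering point project bijectively to the $s$ lines through its image'' to obtain a contradiction ``at $\tilde f(x_1)$''. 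But at $\tilde f(x_1)$ only the single line $L$ appears, so nothing is violated there; and at $x_1$ or $x_2$ the local bijection only rules out the case where \emph{both} $\ell_1$ and $\ell_2$ pass through the \emph{same} $x_i$. The generic case produced by your pigeonhole is $\ell_1\ni x_1$, $\ell_2\ni x_2$ with $x_1\neq x_2$ in the same fibre, and then $q(\ell_1)=q(\ell_2)$ is entirely consistent with the local bijections (indeed $\ell_2=g(\ell_1)$ for the deck transformation $g$ sending $x_1$ to $x_2$ is exactly such a pair). So the contradiction is not actually reached. The cleanest fix is simply to adopt the paper's reading: take the hypothesised pointwise lift to already be a bijection of $\mathcal{\tilde P}$, after which your final sentence is all that is required.
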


\begin{proof}
So given an automorphism $f:(\mathcal{P},\mathcal{L}) \rightarrow (\mathcal{P},\mathcal{L})$, it is certainly possible to lift to $\tilde{f}$ which is a permutation of the points in $\mathcal{\tilde{P}}$. In order for this to be an automorphism, it must send lines to lines. Hence the result follows.
\end{proof}

Clearly, this is little more than a restating of what is required for an automorphism to lift. We therefore ask the question:

\begin{ques}
For a $G$-covering of configurations, what are the requirements of an automorphism on the covered configuration in order for it to lift? 
\end{ques}

This question is asking for an answer analogous to the Lifting Criterion in algebraic topology. Once again see \cite{hatcher2005algebraic} for more details.

In \cite{angluin1981finite}, \cite{leighton1982finite}, and \cite{neumann2009leighton}, the authors ask when two graphs have a common covering. We then anologously ask:

\begin{ques}
When do two configurations have a common covering? Or have a common $G$-covering?
\end{ques} 

Clearly if $t=2$ then the results of \cite{angluin1981finite}, \cite{leighton1982finite}, and \cite{neumann2009leighton} apply (as a Menger graph uniquely determines the configuration), but for $t\geq3$ the question stands open.

\section{Prime Configurations}

We begin this section with another definition that follows directly from the previous section. This is the notion of something being prime if it cannot be divided in some particular sense.

\begin{defn}
A configuration is \textit{prime} if it does not $G$-cover any other configuration.
\end{defn}

Consider now an action of a subgroup of the automorphism group of a configuration. We determine when this action gives an orbit space which itself can be considered a configuration.

Initially, the orbit space will just be a collection of points (each representing an orbit of points in the configuration) and a collection of lines that contain those points (again, each representing an orbit of lines in the configuration). Hence, we need only consider when this can be given the incidence structure of a configuration.

\begin{prop}
Suppose that $G$ is a subgroup of the automorphism group of $(\mathcal{\tilde{P}},\mathcal{\tilde{L}})$ that acts to give a configuration orbit space $(\mathcal{P},\mathcal{L})$ with the same $s$ and $t$ values. Then the following hold:
\begin{enumerate}
    \item $G$ is a semiregular action on both the collection of points and the collection of lines.
    \item $|G|\leq \frac{n}{3}$
    \item $s(t-1)+1 \leq \frac{m}{|G|}$
    \item $t(s-1)+1 \leq \frac{n}{|G|}$
\end{enumerate}
\end{prop}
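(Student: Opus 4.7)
The plan is to establish the four claims in sequence, using that the orbit projection $\tilde{\mathcal{C}} \to \tilde{\mathcal{C}}/G = (\mathcal{P},\mathcal{L})$ is a covering of configurations in the sense of the earlier definition, so that once semiregularity is verified the quotient is an $\left((n/|G|)_{s},(m/|G|)_{t}\right)$-configuration and the basic configuration inequalities apply to it.

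For (1), I would argue semiregularity on points by contradiction. Suppose some non-identity $g \in G$ fixes a point $p \in \tilde{\mathcal{P}}$. Then $g$ permutes the $s$ lines $\ell_{1},\ldots,\ell_{s}$ through $p$, and these project to lines of the quotient through $[p]$. If $g \cdot \ell_{i}=\ell_{j}$ for some $i \neq j$, then $[\ell_{i}]=[\ell_{j}]$, so $[p]$ would be incident with strictly fewer than $s$ lines of $(\mathcal{P},\mathcal{L})$, contradicting the type of the quotient. Hence $g$ fixes each $\ell_{i}$ setwise. But if, on such a line, $g$ sent a point $q$ to a distinct $q'\in \ell_{i}$, then $q$ and $q'$ would lie in a common $G$-orbit and the quotient line $[\ell_{i}]$ would contain strictly fewer than $t$ distinct points, again a contradiction. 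So $g$ fixes every point on every line through $p$. Iterating this local argument along the connectedness of $\tilde{\mathcal{C}}$ assumed in the preliminaries, $g$ fixes every point of $\tilde{\mathcal{P}}$; since $G$ is a subgroup of the automorphism group and therefore acts faithfully on points, $g$ is the identity, a contradiction. A dual argument establishes semiregularity on lines.

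Claim (2) is then immediate: semiregularity on points yields $|\mathcal{P}|=n/|G|$, and applying the configuration inequality $t(s-1)+1 \leq |\mathcal{P}|$ to the quotient in the non-degenerate case $s,t \geq 2$ gives $n/|G| \geq 3$, i.e.\ $|G| \leq n/3$. Claims (3) and (4) are then direct applications of the previously stated inequalities proposition to the $\left((n/|G|)_{s},(m/|G|)_{t}\right)$-configuration $(\mathcal{P},\mathcal{L})$, using $|\mathcal{L}|=m/|G|$ on one side. The main obstacle is the connectedness step in (1): one must verify carefully that the local assertion ``$g$ fixes a point implies $g$ fixes every point on every line through that point'' propagates along arbitrary chains of shared lines to cover all of $\tilde{\mathcal{P}}$, which relies on the explicit connectedness hypothesis stated in the introduction.
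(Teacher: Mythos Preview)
Your proof is correct and follows the same outline as the paper's: establish semiregularity, observe that the quotient is then an $\left((n/|G|)_s,(m/|G|)_t\right)$-configuration, and read off (2)--(4) from the standard inequalities. The paper's own argument is far terser---it declares (1) ``clear'' without the connectedness propagation you spell out, and for (2) it simply invokes the fact that a configuration must have at least three points rather than deriving this from (4) with $s,t\ge 2$ as you do; your version is strictly more detailed but not a different route.
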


\begin{proof}

It is clear that the action must be such that each element is a derangement of both the points and lines as $s$ and $t$ are invariant and using the same argument as the proof of Proposition 3.1. However, there must be more than 2 orbits of points and lines, hence a semi-regular action. 

Then $(\mathcal{P},\mathcal{L})$ has $n_{2}=\frac{n}{|G|}$ points and $m_{2}=\frac{m}{|G|}$ lines. For the orbit space to be a configuration, there must be at least three points, so $|G|\leq \frac{n}{3}$.

Clearly Proposition 2.2 is satisfied, and Proposition 2.3 requires $s(t-1)+1 \leq \frac{m}{|G|}$ and $t(s-1)+1 \leq \frac{n}{|G|}$.
\end{proof}

This proposition yields the corollaries:

\begin{cor}
A configuration is prime if $n$ or $m$ is prime.
\end{cor}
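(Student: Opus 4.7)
\emph{Proof plan.} The plan is to deduce primeness from a divisibility relation between the sizes of the configurations joined by a covering. I would argue by contradiction: suppose $\mathcal{C}=(\mathcal{P},\mathcal{L})$ is an $(n_s,m_t)$-configuration with $n$ or $m$ prime, and suppose $\mathcal{C}$ covers some other configuration $\mathcal{C}'=(\mathcal{P}',\mathcal{L}')$ via a projection $q$. Let $k$ denote the common cardinality of a point-fiber, so $n=k\cdot|\mathcal{P}'|$, i.e.\ $|\mathcal{P}'|$ divides $n$.

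Next I would establish the dual divisibility. Since a covering preserves the local parameters $s$ and $t$ (lines through a point and points on a line), Proposition 2.3 applied to both configurations gives $nt = ms$ and $|\mathcal{P}'|t = |\mathcal{L}'|s$. Substituting $n = k|\mathcal{P}'|$ into the first and comparing with the second yields $m = k\cdot|\mathcal{L}'|$, so $|\mathcal{L}'|$ divides $m$ as well. Hence $k$ divides $\gcd(n,m)$.

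Then I would exploit primeness. If $n$ is prime, $k\in\{1,n\}$. The case $k=n$ forces $|\mathcal{P}'|=1$, which is ruled out by the three-point minimum for configurations used in the proof of Proposition 4.2. The case $k=1$ makes $q$ a bijection on points; since each line of $\mathcal{C}$ has exactly $t$ points whose images lie in a line of $\mathcal{C}'$ of the same cardinality, $q$ must restrict to a bijection on every line, and combined with $m = |\mathcal{L}'|$ this forces $q$ to be an isomorphism $\mathcal{C}\cong\mathcal{C}'$, contradicting the word ``other'' in Definition 4.1. The case $m$ prime is handled dually by running the analysis through $k\in\{1,m\}$ with the degenerate $|\mathcal{L}'|=1$ branch eliminated by Propositions 2.3 and 2.4.

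The main obstacle is cleanly disposing of the $k=1$ case---confirming that a point-bijective covering is genuinely an isomorphism rather than a map onto some distinct configuration on the same point set. This rests on the preservation of $t$ (so each line maps bijectively onto its image) together with the numerical identity $m=|\mathcal{L}'|$ (so the induced map on lines is a bijection). These facts are implicit in the definition of covering but worth checking explicitly; once in hand, the rest is pure divisibility.
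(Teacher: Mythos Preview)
Your argument is correct and follows the same divisibility idea as the paper: the degree of the covering divides $n$ (and $m$), so primality of $n$ or $m$ forces the degree to be $1$ or to collapse the base to fewer than three points. The paper's one-line proof routes this through Proposition~4.1, writing the degree as the order $|G|$ of a group of covering translations and invoking the bound $|G|\le n/3$ to kill the large case, whereas you work directly with the fiber size $k$ from Definition~3.1 and eliminate $k=n$ via the three-point requirement. Your version is slightly more self-contained in that it does not tacitly assume every covering is regular; on the other hand, the paper's appeal to Proposition~4.1 disposes of both endpoints at once without the separate $k=1$ isomorphism check you flag. Your derivation of $k\mid m$ from the invariance of $s,t$ together with $ns=mt$ is exactly the step the paper suppresses when it writes ``$|G|$ divides $n$ (or $m$)''. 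The obstacle you single out---showing a degree-$1$ covering is an isomorphism---is real but mild once ``lines project to lines'' is read as $q(\tilde{l})\in\mathcal{L}$; then $q$ bijective on points forces $t'=t$, hence $m=|\mathcal{L}'|$, and injectivity on lines follows from the incidence-geometry axiom that two points determine at most one line.
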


\begin{proof}
$|G|$ divides $n$ (or $m$) so $G$ is trivial or $|G|=n$ (or $|G|=m$), hence no action can satisfy the requirements of Proposition 4.1.
\end{proof}

\begin{cor}
An $(n_{s},m_{t})$ configuration is prime if no factor of $m$ is greater than or equal to $s(t-1)-1$ or no factor of $n$ is greater than or equal to $t(s-1)-1$. 
\end{cor}

\begin{proof}
This follows directly from parts 3. and 4. of Proposition 4.1.
\end{proof}

We now consider some properties of the possible groups $G$.

First note that each element must act freely, and hence each element is a derangement. 

So now we give the following result:

\begin{lem}
Suppose $G$ is a group acting on a $(n_{s},m_{t})$-configuration so that each element except the identity is a derangement. Then any element is a product of $\frac{n}{k}$ disjoint  $k$-cycles.
\end{lem}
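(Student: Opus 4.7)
The plan is to analyze the cycle structure of a single element $g \in G$ acting on the point set, using the hypothesis that every non-identity element of $G$ is a derangement. The key observation is that this hypothesis propagates to powers: if $g \neq \mathrm{id}$ and $h \in G$ fixes even a single point, then $h = \mathrm{id}$.

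First I would let $g \in G$ be an arbitrary non-identity element, and write its disjoint cycle decomposition on $\mathcal{\tilde{P}}$ as cycles of lengths $k_{1}, k_{2}, \ldots, k_{r}$. Pick any particular cycle length $k_{i}$. The power $g^{k_{i}}$ fixes every point lying in that cycle, so $g^{k_{i}}$ is not a derangement; since $g^{k_{i}} \in G$ and the only non-derangement in $G$ is the identity, we conclude $g^{k_{i}} = \mathrm{id}$. Hence the order of $g$ divides $k_{i}$. Conversely, $k_{i}$ divides the order of $g$ because a cycle of length $k_{i}$ requires at least $k_{i}$ iterations to return to the identity. Therefore $k_{i}$ equals the order of $g$ for every $i$; call this common value $k$.

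It follows that $g$ is a product of $k$-cycles only, so the $n$ points are partitioned into cycles of length $k$, giving exactly $n/k$ disjoint $k$-cycles (and in particular $k$ divides $n$). This is precisely the statement to be proved.

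The only mildly delicate point — really the lone step to get right — is the direction ``$\mathrm{ord}(g) \mid k_{i}$'': one must invoke that $g^{k_{i}}$ belongs to $G$ and apply the derangement hypothesis to this specific power, rather than only to $g$ itself. Everything else is routine bookkeeping about cycle decompositions of permutations.
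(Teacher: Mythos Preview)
Your argument is correct and follows essentially the same route as the paper: both proofs raise $g$ to the power of one of its cycle lengths and invoke the derangement hypothesis on that power to force all cycle lengths to coincide. Your write-up is simply a slightly more explicit version, pinning the common cycle length down as $\mathrm{ord}(g)$ via the two divisibility directions, whereas the paper argues the contrapositive in one line.
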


\begin{proof}
Suppose that an element $g$ is not a product of $\frac{n}{k}$ disjoint $k$-cycles. So then $g$ can expressed as a product of perhaps an $a$- and a $b$-cycle (and some others too) with $a\neq b$. Then $g^a$ would not be a derangement. This gives our contradiction.
\end{proof}

We now consider the definitions of point- and line- transitivity. A configuration is \textit{point-transitive} (respectively \textit{line-transitive}) if there is only one orbit under the automorphism group acting on the points (respectively lines). If a configuration is not point or line transitive, it is then possible to state the \textit{orbit numbers} of points (and lines). It then follows that: 

\begin{prop}
Suppose that $G$ acts on a configuration $(\mathcal{P},\mathcal{L})$. Suppose that there is some orbit number of points or lines of $(\mathcal{P},\mathcal{L})$ under the action of the automorphism group such that no factor of $|G|$ divides it. Then the orbit space of $(\mathcal{P},\mathcal{L})$ cannot be a configuration.
\end{prop}

\begin{proof}
Take the described number of an orbit of (without loss of generality) points that no factor of $|G|$ divides. Call this collection $\{p_{1},\ldots,p_{k}\}$. So then $G$ acts (non-effectively) on these points. But if no factor of $|G|$ divides $k$ then it must fix some point. Hence the orbit space cannot be a configuration.
\end{proof}

We now give a table with some selected examples $(n_{3})$ from \cite{betten2000counting} to illustrate the above results. Note that $s(t-1)+1=7$, so that by Corollary 4.3, we need $7\leq m=n$.

\begin{table}[H]
\begin{center}
  \begin{tabular}{ | l | l | l |}
    
    \hline
     Number in \cite{betten2000counting}  & Prime/Non-prime  \\ \hline
     7.1 (Fano)   & Prime ($7$ is prime)\\ \hline
     8.1 (Mobius-Kantor)    & Prime (largest factor $4<7$)\\ \hline
     9.1    & Prime (largest factor $3<7$\\ \hline
     9.3  (Pappus)  &  Prime (largest factor $3<7$\\ \hline
     10.8  (Desargues)   & Prime (largest factor $5<7$ \\ \hline
     11.1    & Prime (11 is prime) \\ \hline
     12.1   & Prime (largest factor $6<7$) \\\hline
     13.3   & Prime (13 is prime) \\ \hline
     14.2    & Not prime (covers Fano) \\ \hline
     15.4    & Prime (largest factor $5<7$) \\ \hline
     16.2    & Not Prime (covers Fano) \\ \hline
     18.1    & Not prime (covers 9.1) \\ \hline
     21.1   & Prime (see remark) \\ \hline
  \end{tabular}
  \end{center}
  \caption{\label{tab:Table 1}Table of examples of prime or non-prime configurations from \cite{betten2000counting}}
\end{table}

\begin{rem}
Note that by \cite{betten2000counting}, the configuration $21.1$ has two orbits of $14$ and $7$ lines. For the configuration to be not prime, it would have to have a $C_{3}$ action (which is true - the automorphism group is order 42) and cover $7.1$, the Fano configuration. But by Proposition 4.5, this is not possible. Hence $21.1$ is prime despite there being a group action that satisfies conditions 2-4 of Proposition 4.1.
\end{rem}

\section{Definition of an Orbiconfiguration}

We now open up to consideration of group actions that are not semi-regular and hence the orbit space cannot be given the incidence structure of a configuration.

We begin with a simple motivating example:

\begin{exmp}
Take a cyclic $C_{3}$ action on the 3 point geometry. Then clearly the orbit space is one point with that point covered by 3 points and one line with that line covered by three lines. This is not a configuration as it fails the minimum number of points.
\end{exmp}

We now task ourselves with a rigorous combinatorial definition:

\begin{defn}
An \textit{orbi-incidence structure} is a set of points $\mathcal{P}=\{p_{1}, \ldots , p_{n^{\prime}}\}$  and a set of lines $\mathcal{L}=\{l_{1}, \ldots , l_{m^{\prime}}\}$ with each point and line having an associated positive integer $a(i)$ and $b(j)$.  Each line $l_{j}$ is a set of points with an associated integer $d(j)$ and where each point $p_{i}$ on the line has a further associated non-negative integer $c(i,j)$.
\end{defn}

Here $a(i),b(i)$ refer to the inverse of the "weight" of the point or line. So for instance, if a  point has an associated value of $2$ this would mean the point has weight $\frac{1}{2}$.

The value $c(i,j)$ refers to the \textit{multiplicity of a point $p_{i}$ on the line $l_{j}$}. $d(j)$ refers to the \textit{multiplicity of a line}. 

Note that $c(i,j)=0$ if the point $p_{i}$ is not on the line $l_{j}$.

We then define $n:=\sum\limits_{i=1}^{n^{\prime}}\frac{1}{a(i)}$ and $m:=\sum\limits_{j=1}^{m^{\prime}}\frac{d(j)}{b(j)}$ as the sum of the weights of the points and the lines respectively.

We then consider firstly how many points are incident with a line. This is:

$$t(j)=\sum\limits_{i=1}^{n^{\prime}}d(j)c(i,j)$$

As each line has multiplicity $d(j)$ and contains $\sum\limits_{i=1}^{n^{\prime}}c(i,j)$ points counting multiplicity.

Finally, we consider how many lines are incident with a point. This is:

$$s(i)=\sum\limits_{j=1}^{m^{\prime}}\frac{a(i)c(i,j)d(j)}{b(j)}$$

Of the 4 associated numbers this is the least clear. We explain as follows: Each line has a weight, hence we multiply by $\frac{1}{b(j)}$ to equalize this, then the product $a(i)c(i,j)$ is the number of times the point occurs on a line divided by its' weight.

We can now refine this definition as follows:

\begin{defn}
If any pair of points lies on at most one line (not counting multiplicity), then we refer to the structure as an \textit{orbi-incidence geometry}.
\end{defn}

\begin{defn}
If furthermore $s,t$ are constant functions, then we refer to the structure as an \textit{orbiconfiguration}.
\end{defn}

\section{Properties of Orbiconfigurations}

We first offer an anologue to a Menger graph. Here the points (vertices) and lines (collection of edges) are annotated with the associated integers $a(i)$ and $b(j)$. By convention if $a(i)=1$ or $b(j)=1$ then the integers are not shown. If $d(j)$ is greater than $1$, then the line is drawn $d(j)$ times.

We similarly extend the concept of a Levi graph by having the vertices carry their associated integers $a(i)$ and $b(j)$ (unless equal to $1$); a line of multiplicity $d(j)$ is represented $d(j)$ times; and there may be multiple edges between vertices according to $c(i,j)$.

We now show a few examples of orbiconfigurations with associated Menger and Levi graphs.

\begin{exmp}
Consider the orbiconfiguration with $\mathcal{P}=\{p_{1},p_{2}\},\mathcal{L}=\{\{p_{1},p_{2}\},\{p_{2}\}\}$ and associated values: $a(1)=2,a(2)=1,b(1)=1,b(2)=2,c(1,1)=1,c(1,2)=0,c(2,1)=1,c(2,2)=2,d(1)=1,d(2)=1$.

A Menger graph is:
\begin{figure}[ht]
\centering
\includegraphics[height=2cm]{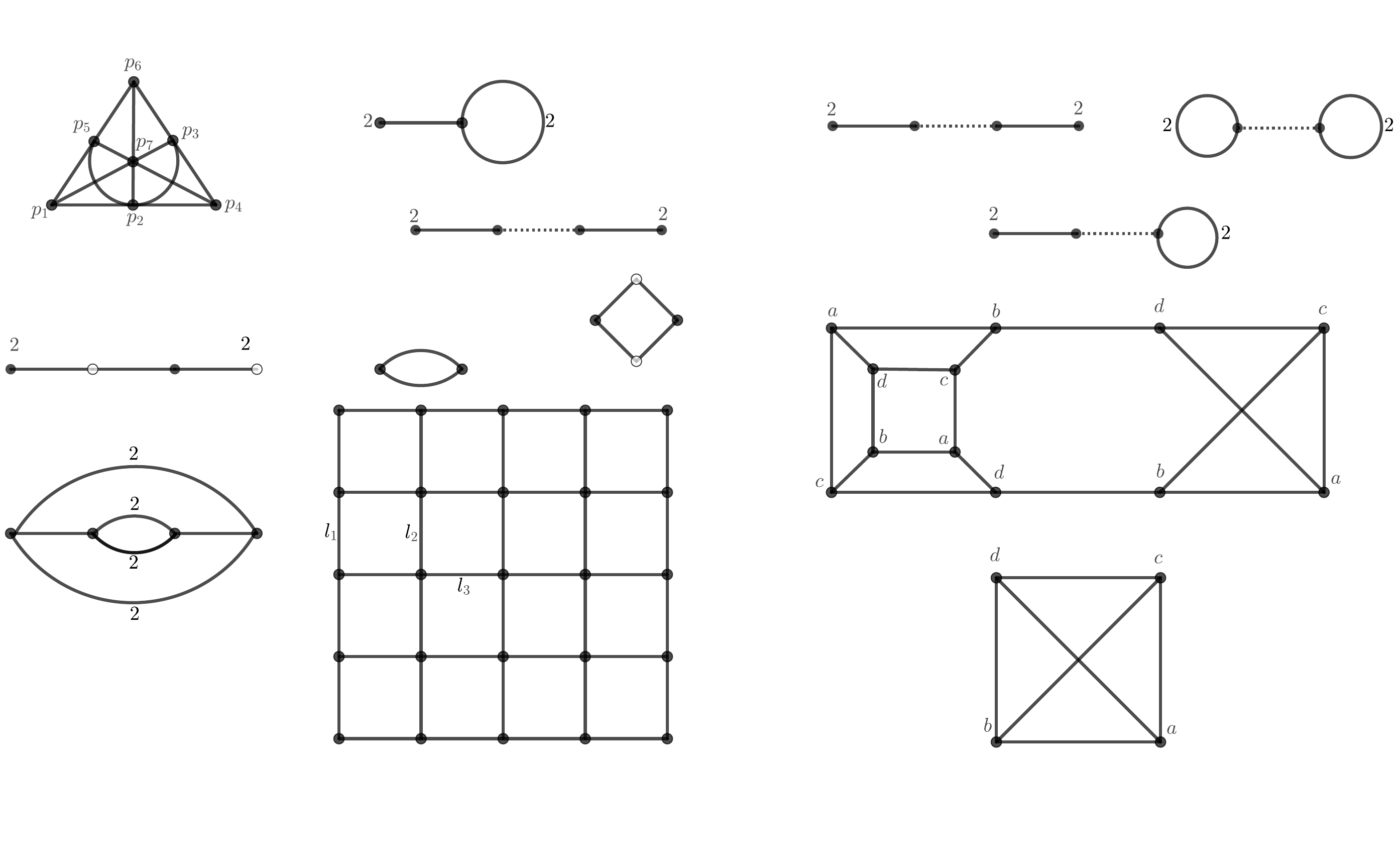}
\caption{Menger graph for orbiconfiguration of Example 6.1}
\end{figure}

A Levi graph is:
\begin{figure}[ht]
\centering
\includegraphics[height=1cm]{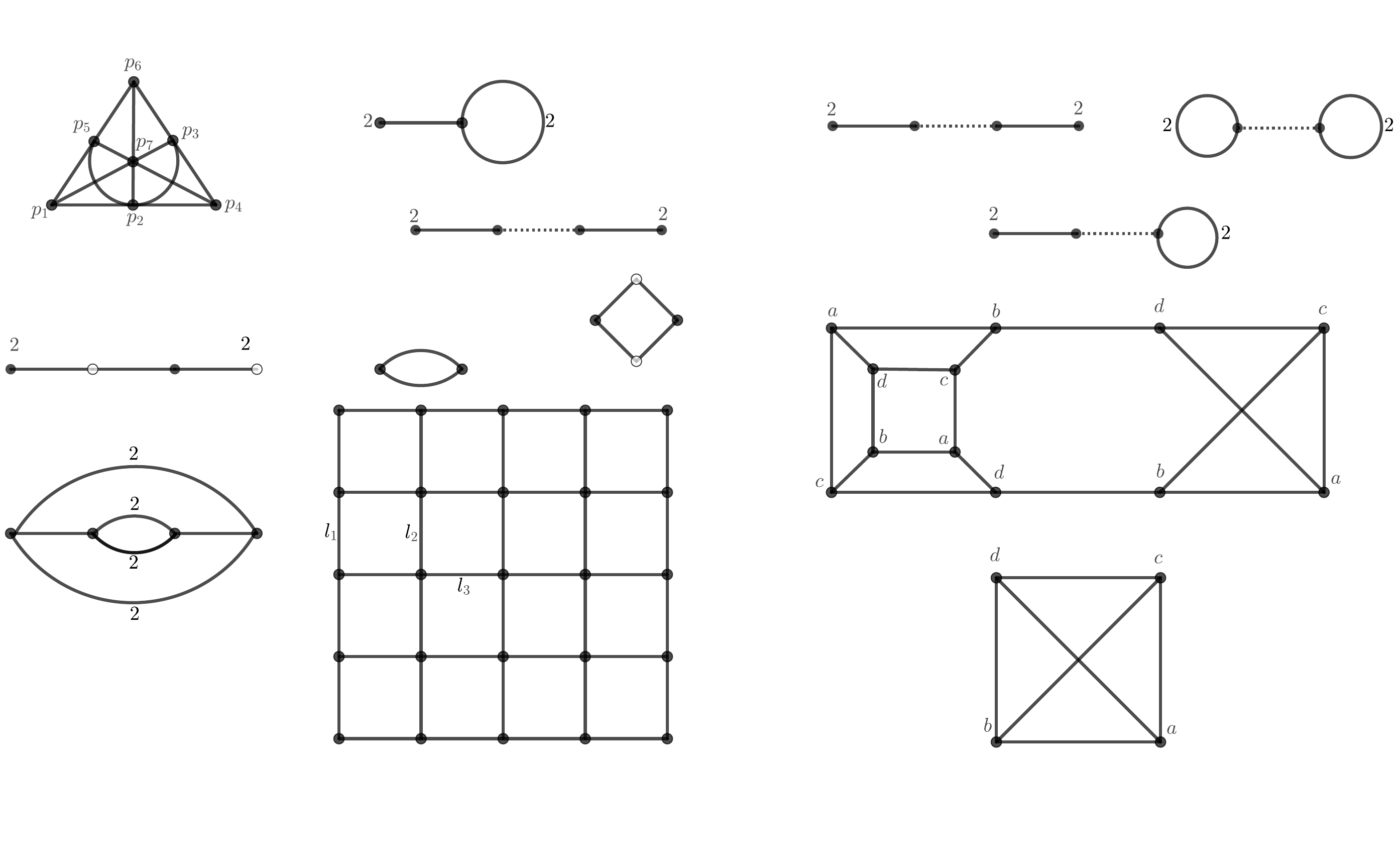}
\caption{Levi graph for orbiconfiguration of Example 6.1}
\end{figure}

So then $n=\frac{1}{2}+\frac{1}{1}=\frac{3}{2}$ and $m=\frac{1}{2}+\frac{1}{1}=\frac{3}{2}$. 

Now $t(1)=1+1=2$ and $t(2)=0+2=2$. 

Finally, $s(1)=\frac{(2)(1)}{1}+\frac{(2)(0)}{2}=2$ and $s(2)=\frac{(1)(1)}{1}+\frac{(1)(2)}{2}=2$. 

Note for future reference that the functions $s$ and $t$ are both equal to the constant $2$.
\end{exmp}

\begin{exmp}
Consider the orbiconfiguration with $\mathcal{P}=\{p_{1},p_{2}\},\mathcal{L}=\{\{p_{1},p_{2}\}\}$ and associated values: $a(1)=1,a(2)=1,b(1)=1,b(2)=1,c(1,1)=1,c(1,2)=1,d(1)=2$.
\newpage
A Menger graph is:
\begin{figure}[h]
\centering
\includegraphics[height=.85cm]{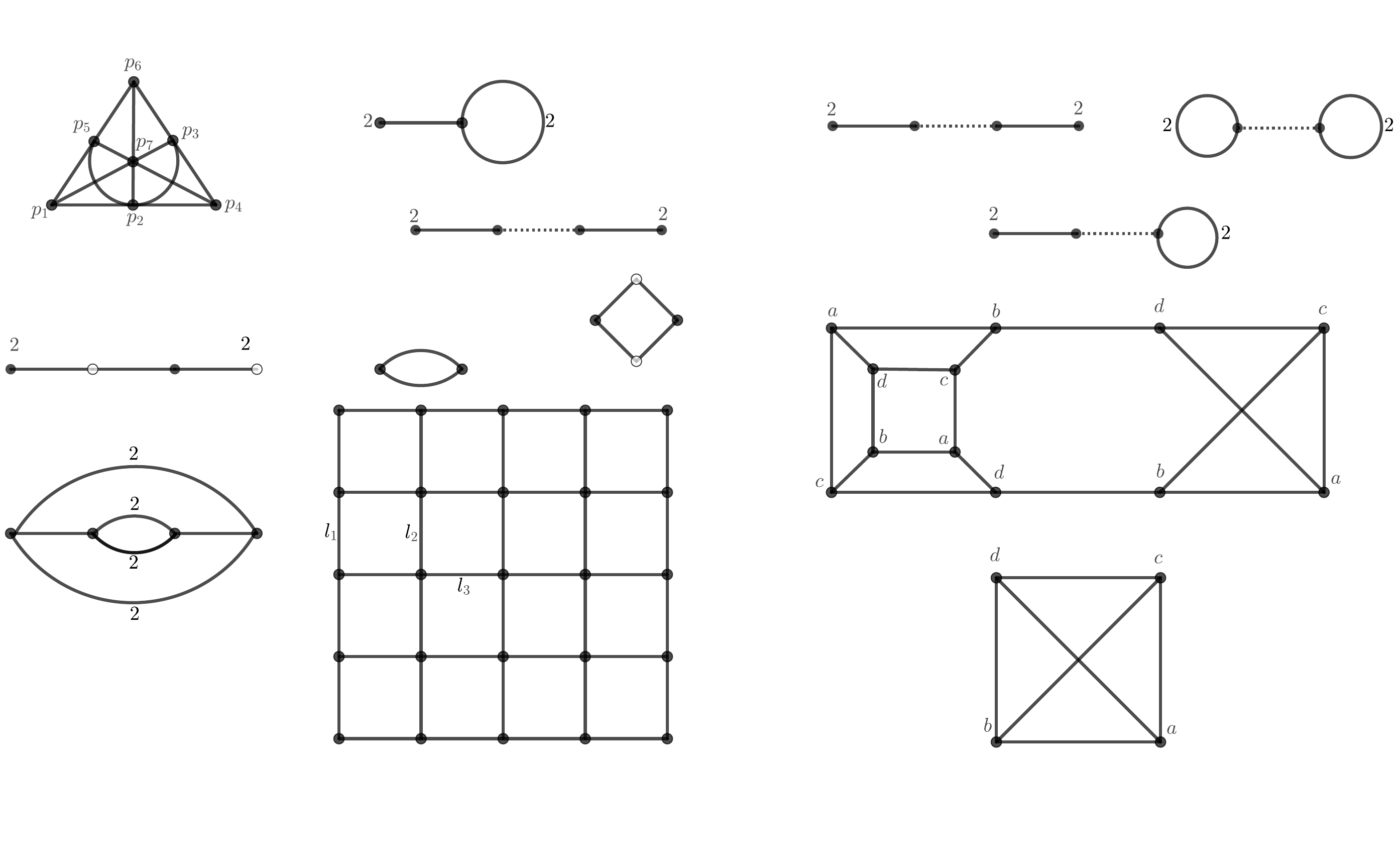}
\caption{Menger graph for orbiconfiguration of Example 6.2}
\end{figure}

A Levi graph is:
\begin{figure}[H]
\centering
\includegraphics[height=2cm]{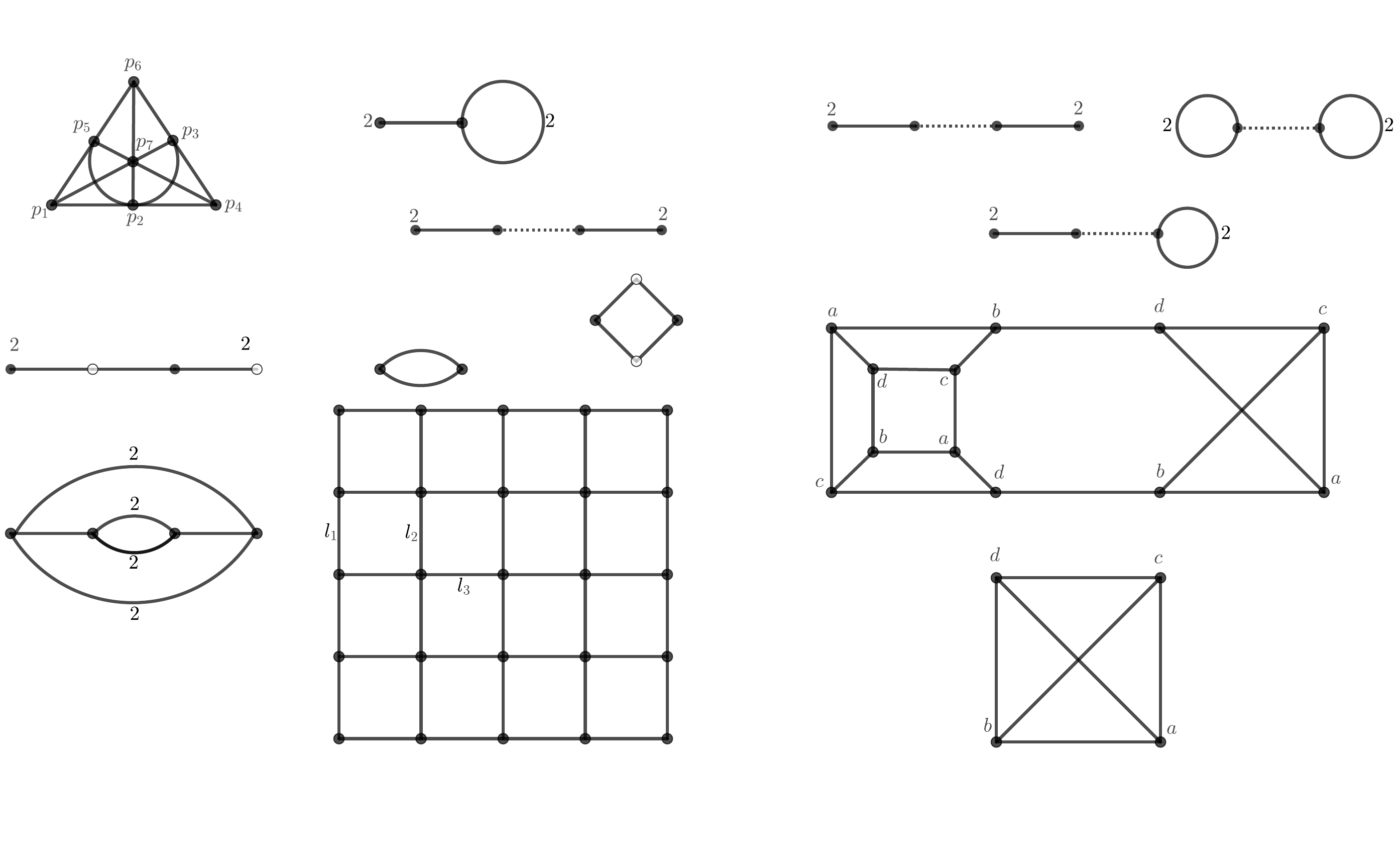}
\caption{Levi graph for orbiconfiguration of Example 6.2}
\end{figure}

\end{exmp}

It should be clear that a Levi graph will uniquely determine an orbiconfiguration whilst a Menger graph will not - consider simply a configuration as a special case of an orbiconfiguration.

We now consider the extension of the concept of a dual configuration:

\begin{defn}
For two orbiconfigurations, a \textit{duality} is an incidence-preserving correspondence that maps points to lines (and vice versa) with the associated integers $a(i)$ and $b(j)$ being carried with the points and lines respectively.
\end{defn}

This definition requires a little more explanation and an example. 

If we let a duality be given by $f:(\mathcal{P}_{1},\mathcal{L}_{1})\rightarrow (\mathcal{P}_{2},\mathcal{L}_{2})$ then each line $f(p_{i})$ will have associated integer $a(i)$ and each point $f(l_{j})$ will have associated integer $b(j)$. Now if a line $l_{j}$ has multiplicity $d(j)$, $f(l_{j})$ will be a collection of $d(j)$ points. If $p_{i}$ and $l_{j}$ are incident $c(i,j)$ times, then $f(l_{j})$ and $f(p_{i})$ will be incident with  $c(i,j)$ times.

The Levi graph makes this much easier to visualize.

\begin{exmp}
We reuse the orbiconfiguration of Example 6.1 to get the dual orbiconfiguration given by: 

$\mathcal{P}=\{p_{1},p_{2}\},\mathcal{L}=\{\{p_{1},p_{2}\},\{p_{2}\}\}$ and associated values: $a(2)=1,a(1)=2,b(1)=1,b(2)=2,c(1,1)=1,c(1,2)=1,c(2,1)=0,c(2,2)=2,d(1)=1,d(2)=1$.

The Levi graph is then given by Figure 6 and this makes it very clear that this orbiconfiguration is self-dual.

\end{exmp}

We now note the following formula still holds:

\begin{prop}
For an orbiconfiguration with values $m,n,s,t$, we have:
$$ns=mt$$
\end{prop}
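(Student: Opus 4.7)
The plan is a double-counting argument on the weighted incidences between points and lines. I would introduce the sum
$$\Sigma \;:=\; \sum_{i=1}^{n'}\sum_{j=1}^{m'} \frac{c(i,j)\,d(j)}{b(j)}$$
and evaluate it in two orders of summation. Summing in one order will collapse via the defining formula for $s(i)$ to give $ns$; summing in the other will collapse via the defining formula for $t(j)$ to give $mt$. The constancy of $s$ and $t$ in an orbiconfiguration is what lets me pull them outside the sums and recover the totals $n$ and $m$ as the residual weight sums.

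More concretely, for the first evaluation I would fix $i$ and observe that the definition
$$s(i) = \sum_{j}\frac{a(i)\,c(i,j)\,d(j)}{b(j)}$$
rearranges to $\sum_j c(i,j)\,d(j)/b(j) = s(i)/a(i)$. Summing over $i$ and invoking $s(i)\equiv s$ gives $\Sigma = s\sum_i 1/a(i) = sn$. For the second evaluation I would fix $j$ and use $t(j)=d(j)\sum_i c(i,j)$ to extract $\sum_i c(i,j) = t(j)/d(j)$, whence $\sum_i c(i,j)\,d(j)/b(j) = t(j)/b(j)$. Summing over $j$ and using $t(j)\equiv t$, the remaining sum of weights is exactly what defines $m$, producing $tm$. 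Equating the two evaluations of $\Sigma$ gives the desired identity.

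The main obstacle is the multiplicity bookkeeping: the weights $a(i)$, $b(j)$, and $d(j)$ enter asymmetrically in the definitions of $s$ and $t$, so choosing the correct weighted incidence is essential. The weight $c(i,j)\,d(j)/b(j)$ is exactly the expression that is recognisable as $s(i)/a(i)$ after summation over $j$ (yielding the $1/a(i)$ factors that define $n$) and as $t(j)/b(j)$ after summation over $i$ (yielding what the definition of $m$ requires). In the trivial-weight case $a(i)=b(j)=d(j)=1$ this is precisely the classical double-count of incidences that proves Proposition~2.3, so the orbiconfiguration statement is really the weighted generalisation of that classical argument.
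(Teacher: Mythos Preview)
Your argument is exactly the paper's: both introduce the double sum $\sum_{i,j} c(i,j)\,d(j)/b(j)$ and evaluate it in the two orders, collapsing one order via the defining formula for $s(i)$ to obtain $ns$ and the other via the defining formula for $t(j)$ to obtain $mt$.

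One bookkeeping caveat is worth flagging. In your second evaluation you correctly reach $\Sigma = \sum_j t(j)/b(j) = t\sum_j 1/b(j)$ and then identify this with $tm$; but by definition $m=\sum_j d(j)/b(j)$, so that identification is only literally valid when every $d(j)=1$. The paper's own computation glosses over the identical point --- its expansion of $mt$ tacitly uses $\sum_i c(i,j)$ in place of $t(j)=d(j)\sum_i c(i,j)$ --- so your write-up is faithful to the original, and the discrepancy reflects an inconsistency in the paper's definitions (the extra $d(j)$ in $t(j)$) rather than a defect in your double-counting strategy.
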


\begin{proof}
We simply calculate:

$$ns=\sum\limits_{i=1}^{n^{\prime}} \sum\limits_{j=1}^{m^{\prime}}\frac{1}{a(i)}\frac{a(i)c(i,j)d(j)}{b(j)}=\sum\limits_{i=1}^{n^{\prime}}\sum\limits_{j=1}^{m^{\prime}}\frac{c(i,j)d(j)}{b(j)}$$

and:

$$mt=\sum\limits_{j=1}^{m^{\prime}} \sum\limits_{i=1}^{n^{\prime}}\frac{d(j)}{b(j)}c(i,j)=\sum\limits_{j=1}^{m^{\prime}} \sum\limits_{i=1}^{n^{\prime}}\frac{c(i,j)d(j)}{b(j)}$$

Clearly the summands can be exchanged.
\end{proof}

We can also further refine to a \textit{symmetric orbiconfiguration} where $m=n$ and hence also $s=t$.

We now formally show that if a configuration is acted upon by a  group $G$ then the orbit space can be considered as an orbiconfiguration with the values of $s,t$ invariant and the values of $m,n$ divided by the order of the group of covering transformations.

So firstly given a configuration $(\mathcal{P},\mathcal{L})$ with quotient map $q$, the orbit space is initially just a collection of points $\{p_{1}, \ldots , p_{n^{\prime}}\}$. To each of these points we associate values $a(i)$ by:

$$a(i)=\frac{|G|}{\#q^{-1}(p_{i})}$$

This is $|G|$ divided by the orbit number of any lift of $p_{i}$. 

Then the orbit space can also be considered as a collection of lines $\{l_{1}, \ldots , l_{m^{\prime}}\}$. We then associate integers $d(j)$ and $b(j)$ by the following:

$$\frac{b(j)}{d(j)}=\frac{|G|}{\#q^{-1}(l_{j})}$$

This is once again $|G|$ divided by the orbit number of any lift of $l_{j}$. 

Here we assume that the fraction in its lowest terms. Note that either $b(j)$ or $d(j)$ must be equal to one.

Finally, we define:

$$c(i,j)=\#(q^{-1}(p_{i})\cap \tilde{l}_{j})$$

Here $\tilde{l}_{j}$ is any lift of $l_{j}$.

\begin{prop}
Suppose that a configuration $(\mathcal{P},\mathcal{L})$ with values $\tilde{m},\tilde{n},\tilde{s},\tilde{t}$ is acted on by a group $G$. Then the orbit space is an orbiconfiguration when the lines and points are given the associated integers as above. Moreover, $m,n,s,t$ can be determined by:
\begin{enumerate}
    \item $\tilde{s}=s$
    \item $\tilde{t}=t$
    \item $\tilde{m}=|G|m$
    \item $\tilde{n}=|G|n$
\end{enumerate}
\end{prop}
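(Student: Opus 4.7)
The plan is to extract explicit formulas for $a(i)$, $b(j)$, $c(i,j)$, $d(j)$ in terms of stabilizers, and then verify each of the four identities by orbit--stabilizer bookkeeping. For any lift $\tilde{p}\in q^{-1}(p_i)$, orbit--stabilizer gives $\#q^{-1}(p_i)=|G|/|G_{\tilde{p}}|$, so $a(i)=|G_{\tilde{p}}|$; for a lift $\tilde{l}_j\in q^{-1}(l_j)$, the ratio $|G|/\#q^{-1}(l_j)=|G_{\tilde{l}_{j}}|$ is already a positive integer in lowest terms, which forces $b(j)=|G_{\tilde{l}_{j}}|$ and $d(j)=1$. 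The value $c(i,j)=\#(q^{-1}(p_i)\cap\tilde{l}_j)$ is taken directly from the construction, and one checks easily that it is independent of the choice of lift $\tilde{l}_j$ since replacing $\tilde{l}_j$ by $g\tilde{l}_j$ permutes $q^{-1}(p_i)$ via $g$. Before turning to the identities, I would also flag that the orbi-incidence-\emph{geometry} condition (two points on at most one line) and the condition $\gcd\{a(i)\}=1$ of Definition 4.2 are not automatic from the construction and need either a side argument or a standing assumption on the action.

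Items 3, 4, and 2 then drop out by one-line partition arguments. For item 3, since the $G$-orbits partition $\tilde{\mathcal{L}}$,
\begin{equation*}
|G|\,m=\sum_{j}\frac{|G|\,d(j)}{b(j)}=\sum_{j}\frac{|G|}{|G_{\tilde{l}_{j}}|}=\sum_{j}\#q^{-1}(l_j)=\tilde{m},
\end{equation*}
and the analogous calculation with points gives item 4. For item 2,
\begin{equation*}
t(j)=\sum_{i}d(j)c(i,j)=\sum_{i}\#\bigl(q^{-1}(p_i)\cap\tilde{l}_j\bigr)=\#\tilde{l}_j=\tilde{t},
\end{equation*}
since the $q^{-1}(p_i)$ partition the points of $\tilde{l}_j$; in particular $t$ is constant with value $\tilde{t}$.

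The main content is item 1. I would rewrite
\begin{equation*}
s(i)=\sum_{j}\frac{a(i)c(i,j)d(j)}{b(j)}=\sum_{j}\frac{|G_{\tilde{p}}|\,c(i,j)}{|G_{\tilde{l}_{j}}|}
\end{equation*}
and interpret each summand as the number of lines in the $G$-orbit of $\tilde{l}_j$ that pass through the chosen lift $\tilde{p}$. The double count goes like this: a line $g\tilde{l}_j$ contains $\tilde{p}$ iff $g^{-1}\tilde{p}\in\tilde{l}_j$; for each of the $c(i,j)$ targets $x\in q^{-1}(p_i)\cap\tilde{l}_j$ the set $\{g\in G:g^{-1}\tilde{p}=x\}$ is a coset of $G_{\tilde{p}}$ of size $|G_{\tilde{p}}|$; and two such $g$'s produce the same line iff they differ by an element of $G_{\tilde{l}_{j}}$. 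Thus the orbit of $\tilde{l}_j$ contributes exactly $|G_{\tilde{p}}|c(i,j)/|G_{\tilde{l}_{j}}|$ lines through $\tilde{p}$, and summing over $j$ counts every line of $\tilde{\mathcal{L}}$ through $\tilde{p}$ precisely once, giving $s(i)=\tilde{s}$. The main obstacle is setting up this double count cleanly; once that is in place, independence from the choice of lift $\tilde{p}$ is automatic, since stabilizers within one $G$-orbit are conjugate and so have the same order.
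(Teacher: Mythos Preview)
Your argument is correct and follows the same orbit--counting strategy as the paper, with two places where your version is sharper. First, you note that $|G|/\#q^{-1}(l_j)=|G_{\tilde l_j}|$ is already a positive integer by orbit--stabilizer, forcing $d(j)=1$; the paper carries along a separate case ``$b(j)=1$, $d(j)>1$'' which is in fact vacuous for quotients by a group action. Second, for $s(i)=\tilde s$ the paper rewrites the sum as $\frac{1}{\#q^{-1}(p_i)}\sum_j c(i,j)\,\#q^{-1}(l_j)$, interprets the numerator as the total number of point--line incidences with the point ranging over $q^{-1}(p_i)$, and divides; you instead fix a single lift $\tilde p$ and show directly that $|G_{\tilde p}|\,c(i,j)/|G_{\tilde l_j}|$ counts the lines in the $G$-orbit of $\tilde l_j$ passing through $\tilde p$. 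The two counts are equivalent via orbit--stabilizer, but yours makes the integrality of each summand explicit. The paper does supply the side argument you anticipate for the ``two points lie on at most one line'' condition; your flag on the normalization $\gcd\{a(i)\}=1$, however, is well taken, as the paper does not address it and it is not automatic for an arbitrary faithful action.
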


\begin{proof}
We first need to show that for any pair of points, there is at most one line incident with them. Suppose for contradiction that (without loss of generality) $p_{1}$ and $p_{2}$ are incident with both $l_{1}$ and $l_{2}$. Then lifting the lines to (the distinct lines) $\tilde{l}_{1}$ and $\tilde{l}_{2}$, we can lift $p_{1}$ and $p_{2}$ to (distinct) points $\tilde{p}_{1}$ and $\tilde{p}_{2}$ which are on the lines $\tilde{l}_{1}$ and $\tilde{l}_{2}$. This contradicts the fact that $(\mathcal{P},\mathcal{L})$ is a configuration.

We now show that both $s$ and $t$ are constant functions equal to $\tilde{s}$ and $\tilde{t}$.

Note that: $$t(j)=\sum\limits_{i=1}^{n^{\prime}}d(j)c(i,j)=d(j)\sum\limits_{i=1}^{n^{\prime}}\#(q^{-1}(p_{i}) \cap \tilde{l}_{j})$$ If $d(j)=1$, then this is the number of points incident to the line $\tilde{l}_{j}$, which is the constant $\tilde{t}$.

If instead $b(j)=1$, then $d(j)=\frac{\#q^{-1}(l_{j})}{|G|}$, that is $l_{j}$ lifts to $d(j)|G|$ lines. Hence $t(j)$ is the number of points incident to $\tilde{l}_{j}$, which is again the constant $\tilde{t}$

Now: \begin{align*}
s(i)&=\sum\limits_{j=1}^{m^{\prime}}\frac{a(i)c(i,j)d(j)}{b(j)}\\
&=\sum\limits_{j=1}^{m^{\prime}}\frac{|G|}{\#q^{-1}(p_{i})}\#(q^{-1}(p_{i}) \cap \tilde{l}_{j})\frac{\#q^{-1}(l_{j})}{|G|}\\&=\frac{1}{\#q^{-1}(p_{i})}\sum\limits_{j=1}^{m^{\prime}}\#(q^{-1}(p_{i}) \cap \tilde{l}_{j})\#q^{-1}(l_{j})\\
\end{align*} This is the number of lines incident with any point in $q^{-1}(p_{i})$ divided by the number of points in $q^{-1}(p_{i})$, hence the number of lines incident with any point in $q^{-1}(p_{i})$. This is the constant $\tilde{s}$.

Finally, it is quick to see that: $$m=\sum\limits_{j=1}^{m^{\prime}}\frac{b(j)}{d(j)}=\sum\limits_{j=1}^{m^{\prime}}\frac{\#q^{-1}(l_{j})}{|G|}=\frac{\tilde{m}}{|G|}$$ and: $$n=\sum\limits_{i=1}^{n^{\prime}}\frac{1}{a(i)}=\sum\limits_{i=1}^{n^{\prime}}\frac{\#q^{-1}(p_{i})}{|G|}=\frac{\tilde{n}}{|G|}$$
\end{proof}

\section{Good and Bad Orbiconfigurations}

We now make a further definition:

\begin{defn}
An orbiconfiguration is \textit{good} if it can be $G$-covered by a configuration and \textit{bad} if it cannot.
\end{defn}

This uses the familiar terminology of good and bad orbifolds.

We note that as we did not require $s,t$ to be integer and greater than or equal to two, there are some very easily constructed bad orbiconfigurations by noting the invariance of $s$ under actions by Proposition 6.2. 

\begin{cor}
An $(n_{s},m_{t})$ orbiconfiguration is bad if either $s$ or $t$ are not integers or either $s$ or $t$ are less than $2$. 
\end{cor}

We give a brief example to illustrate this:

\begin{exmp}
We take the orbiconfiguration $\mathcal{P}=\{p_{1},p_{2}\},\mathcal{L}=\{\{p_{1},p_{2}\}\}$ and associated values $a(1)=1,a(2)=1,b(1)=2,c(1,1)=1,c(1,2)=1,d(1)=1$.

A Menger graph is given by:
\begin{figure}[h]
\centering
\includegraphics[height=1cm]{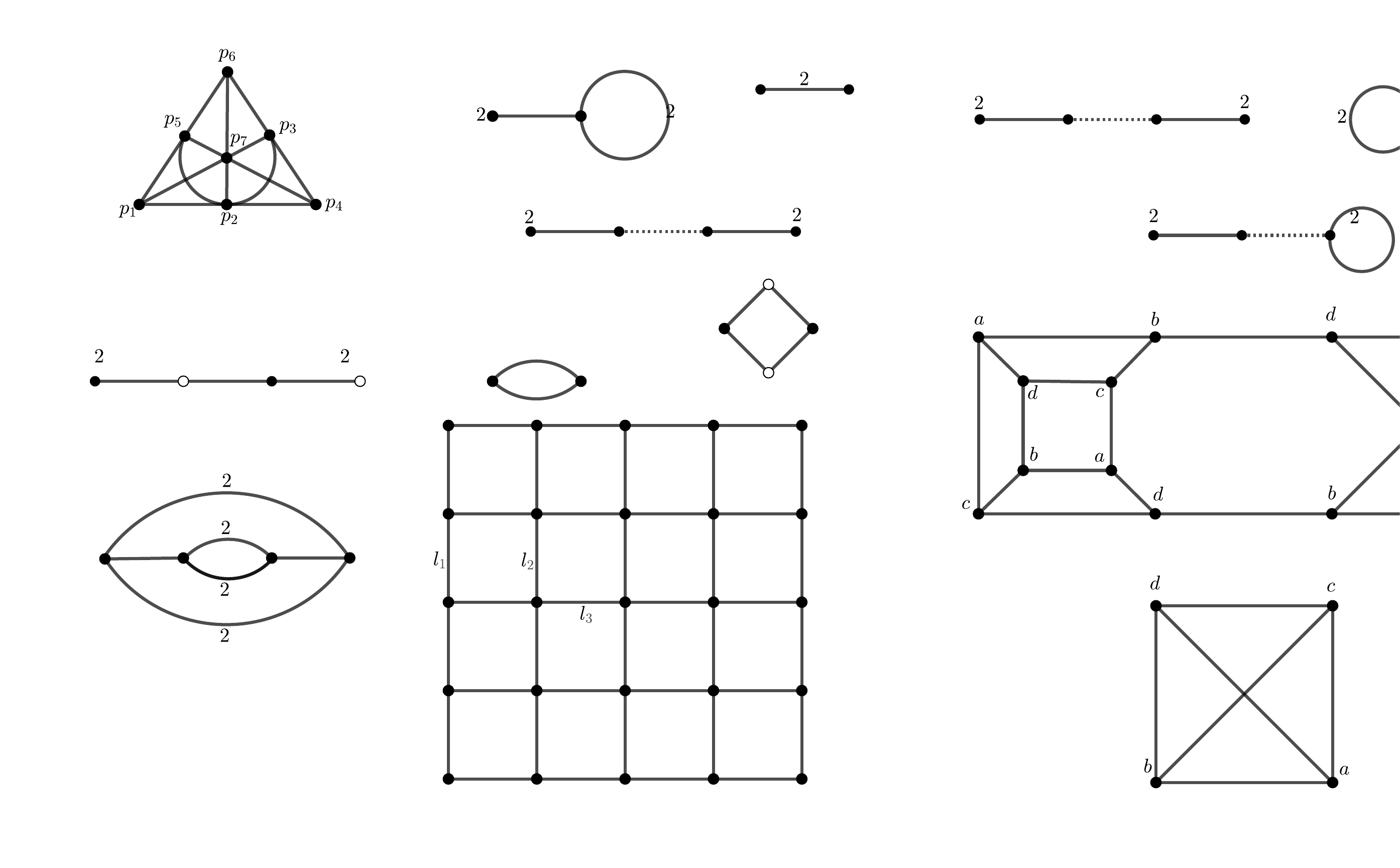}
\caption{Menger graph for orbiconfiguration of Example 7.1}
\end{figure}

Here $s=\frac{1}{2}$ and $t=1$, so by Corollary 7.1, this is a bad orbiconfiguration.

\end{exmp}

Other than these somewhat trivial examples, we have only seen good orbiconfigurations, so we now give an example of a nontrivial bad orbiconfiguration and task ourselves with considering what possible nontrivial bad orbiconfigurations there are in the case when $s=t=2$.

\begin{exmp}
We take the orbiconfiguration given by the Menger graph:
\begin{figure}[ht]
\centering
\includegraphics[height=3cm]{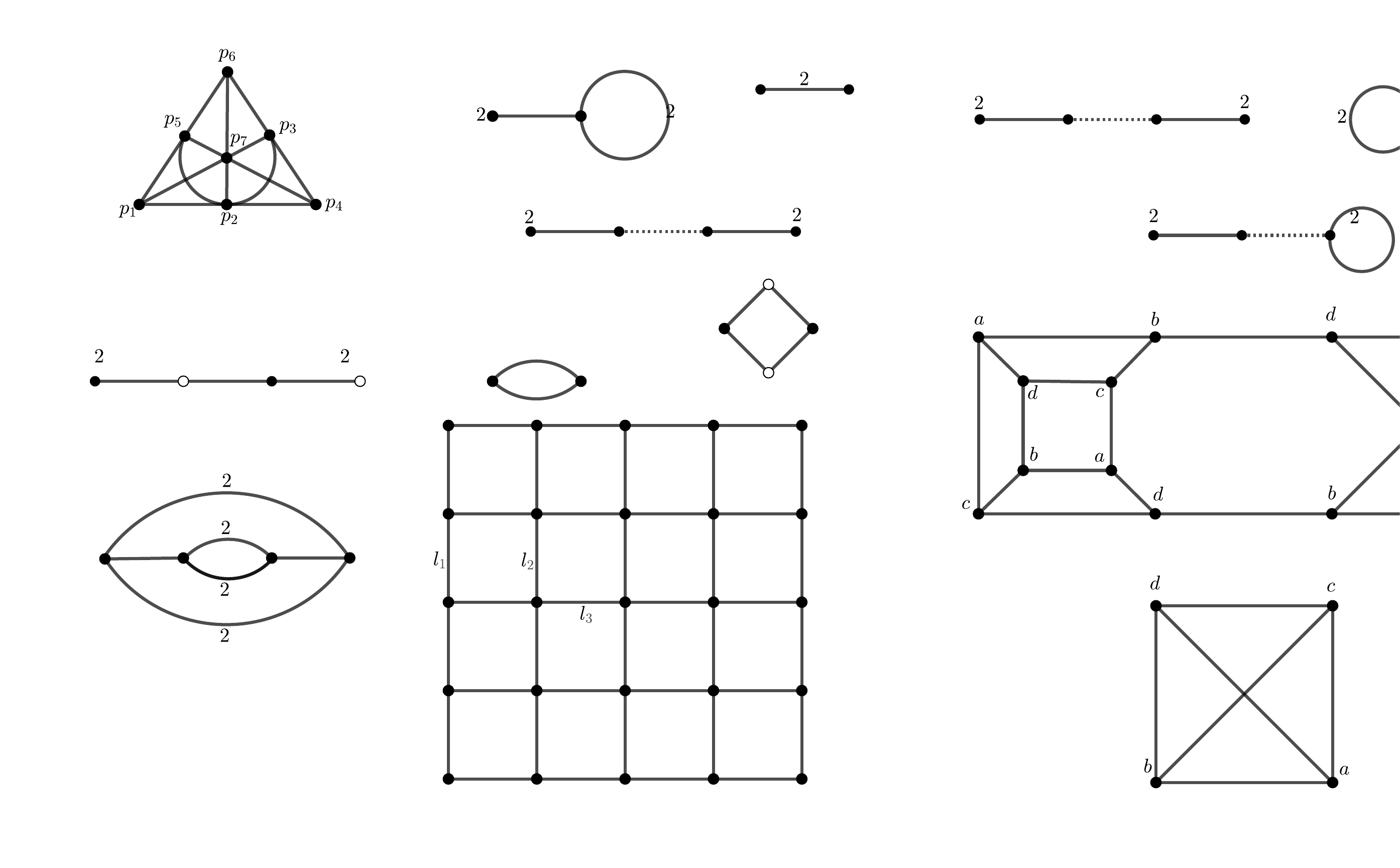}
\caption{Menger graph of an example orbiconfiguration}
\end{figure}

To see that this is necessarily a bad orbiconfiguration we note that $s=t=2$. So that any $G$-covering configuration would necessarily be a polygon with an even number of sides. Yet the automorphism group of such a configuration is $D_{k}$ - the dihedral group on $k$ points. It is fairly immediate to note that any orbit space would either be another polygon configuration or an orbiconfiguration that can be represented by:

\begin{figure}[ht]
\centering
\includegraphics[height=0.75cm]{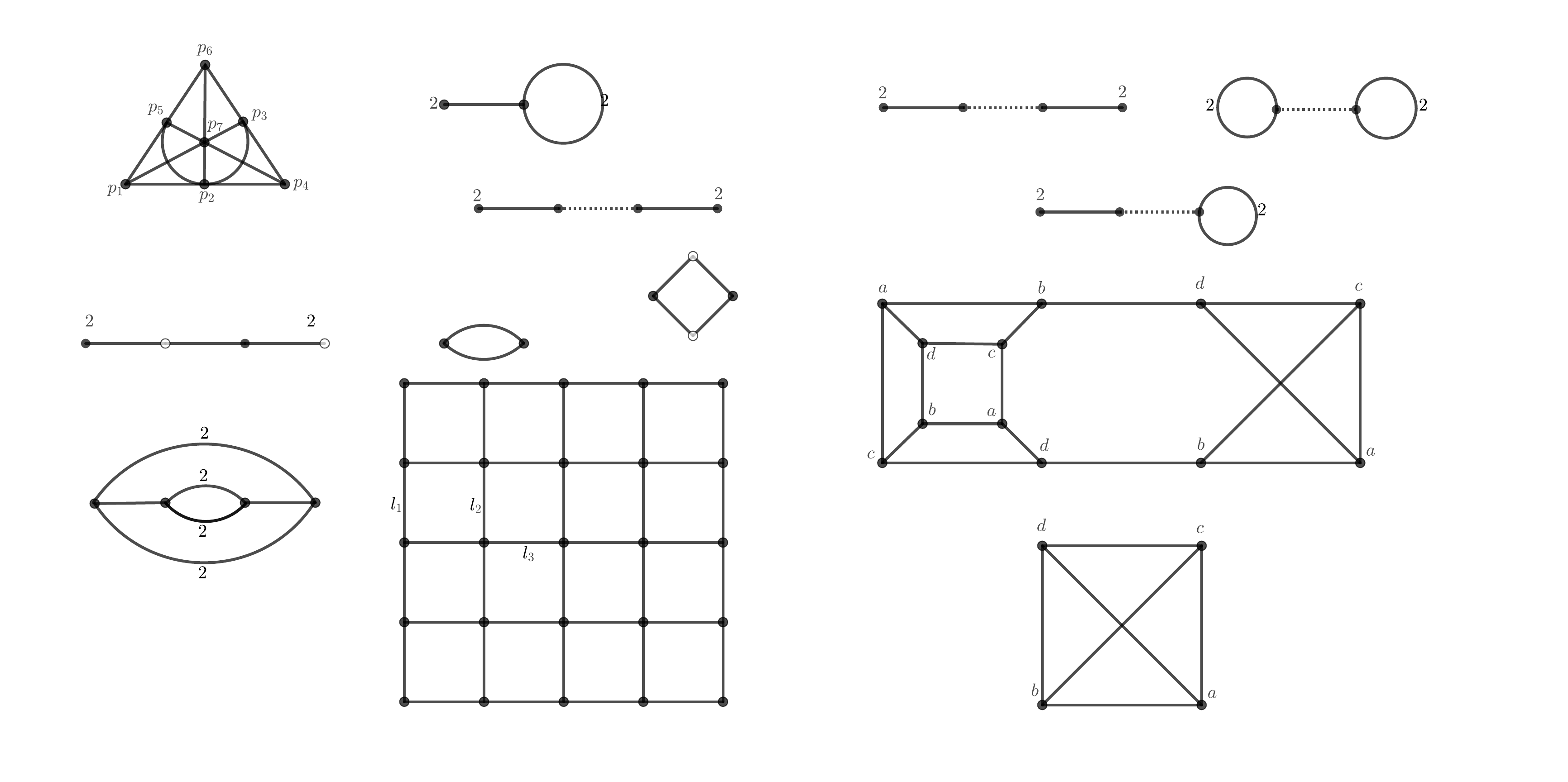}
\caption{Orbit space of an $(n_{2})$ configuration when $n$ is even}
\end{figure}

Or by the dual of the above configuration. Here the dotted line represents a chain of points and lines.

Hence the given orbiconfiguration is bad.

\end{exmp}

We use this example to state the following:
\begin{prop}
An $(n_{2})$ orbiconfiguration is bad if it is not a configuration or one of the following forms:

\begin{figure}[H]
\centering
\includegraphics[height=4cm]{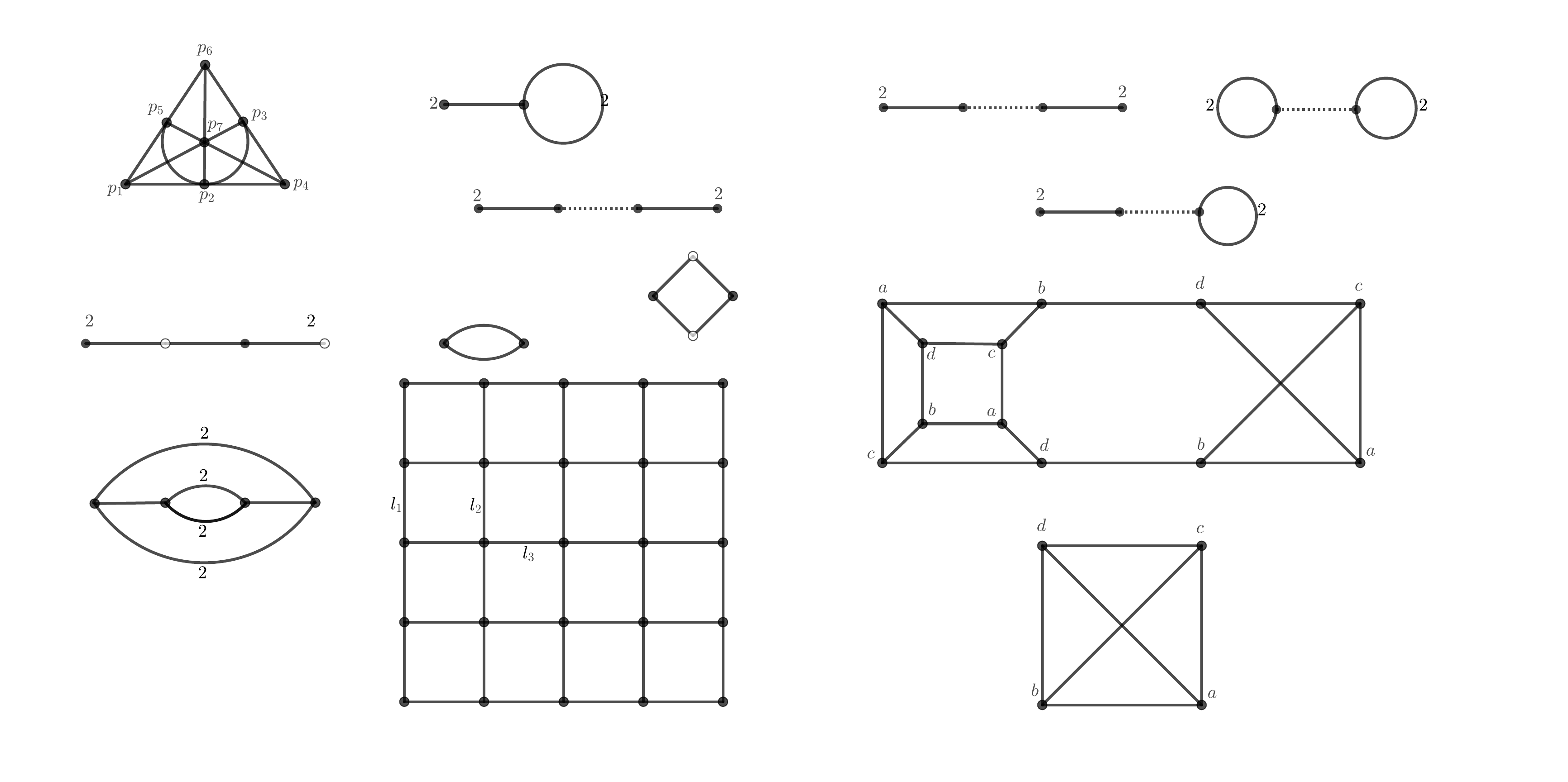}
\caption{Orbit spaces of $(n_{2})$ configurations}
\end{figure}

\end{prop}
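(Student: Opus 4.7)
The approach is to invert Proposition 6.3 and realize any good $(n_{2})$ orbiconfiguration as the orbit space of a polygon. Since $s=t=2$ forces a connected covering configuration to be a single cycle $P_{N}$ whose automorphism group is $D_{N}$, the classification reduces to computing orbit spaces of $P_{N}$ under subgroups $G \leq D_{N}$. Subgroups of $D_{N}$ split into two families: cyclic rotation subgroups $C_{d}$ with $d \mid N$, and dihedral subgroups $D_{d}$ with $d \mid N$ (the latter falling into one or two conjugacy classes depending on the parity of $N/d$).

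The cyclic case is immediate: for $G = C_{d}$ the orbit space is simply the smaller polygon $P_{N/d}$, which accounts for the ``configuration'' alternative in the statement. For the dihedral case $G = D_{d}$, I would quotient in two stages, first by the rotation subgroup $C_{d}$ to reach $P_{N/d}$ and then by a reflection $\sigma$ on $P_{N/d}$. A case split on the parity of $N/d$ then completes the analysis: for odd $N/d$, $\sigma$ fixes exactly one vertex and one opposite edge; for even $N/d$ there are two reflection classes, one fixing a pair of antipodal vertices and one fixing a pair of antipodal edges. Passing to orbits, each fixed vertex becomes a point of weight $a(i)=2$ and each fixed edge a line of weight $b(j)=2$, producing a linear orbiconfiguration with one such decoration at each end --- precisely the shapes shown in the figure.

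For the converse direction I would observe that point and line stabilizers in any action $G \leq D_{N}$ have order at most $2$, and that fixed vertices or edges occur only along the reflection axes of the subgroup. Hence any good $(n_{2})$ orbiconfiguration has all weights in $\{1,2\}$ with all weighted features confined to the two ``endpoints'' of the chain. Any orbiconfiguration with an interior weighted point or line, a weight exceeding $2$, or any shape other than a cycle or finite linear chain therefore cannot be covered by a polygon and is necessarily bad.

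The principal obstacle will be an honest enumeration in the dihedral case: one must verify that each combination of endpoint decorations (two weighted points, two weighted lines, or one of each) is genuinely realized for appropriate $N$ and $d$, and that no further orbit shapes arise from mixing rotations with reflection classes beyond those considered. This is a bounded check but requires care with the parity conditions and with the interplay between the two successive quotients.
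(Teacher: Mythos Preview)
Your proposal is correct and follows the same route as the paper, which simply remarks that ``the proof follows again by just considering how the dihedral group acts on a polygon.'' You have supplied the explicit subgroup classification and orbit-space computation that the paper leaves to the reader, including the parity split on $N/d$ and the identification of endpoint weights with reflection fixed-loci.
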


\begin{proof}
The proof follows again by just considering how the dihedral group acts on a polygon.
\end{proof}

\begin{ques} What are the bad $(n_{3})$ orbiconfigurations?
\end{ques}

\section{Summary}

We here summarize the work of this paper. We have considered how a covering of a configuration could be defined and refined to a $G$-covering. We then introduced the definition of a configuration being prime if it cannot cover another configuration. Finally we defined an orbiconfiguration - a generalized concept of an orbit space of a configuration. We considered some specific properties of orbiconfigurations and then considered the notion of good/bad orbiconfigurations.

\section*{Acknowledgment}

I would like to thank the anonymous referee for their considered, kind, and very useful critique of the first submission. The paper was greatly improved by addressing the given feedback.

\bibliographystyle{unsrt}  
\bibliography{references} 

\end{document}